\DeclareSymbolFont{SY}{U}{psy}{m}{n}
\DeclareMathSymbol{\emptyset}{\mathord}{SY}{'306}
\theoremstyle{plain}
\newtheorem{thm}{Theorem}[section]
\newtheorem{cor}[thm]{Corollary}
\newtheorem{lem}[thm]{Lemma}
\newtheorem{prop}[thm]{Proposition}
\theoremstyle{definition}
\newtheorem{defn}[thm]{Definition}
\newtheorem{rem}[thm]{Remark}
\newtheorem{ex}[thm]{Example}
\numberwithin{equation}{section}
\def\C{{\mathbb C}}
\def\beq{\begin{eqnarray}}
\def\eeq{\end{eqnarray}}
\def\beqa{\begin{eqnarray*}}
\def\eeqa{\end{eqnarray*}}
\begin{document}

\title[Curvature formulaes of  holomorphic curve on $C*$-algebras and Cowen-Douglas Operators]{Curvature formulaes of holomorphic curves on $C^*$-algebras and Cowen-Douglas Operators}

\author{ Kui Ji}



\curraddr{Department of Mathematics, Hebei Normal University,
Shijiazhuang, Hebei 050016, China} \email{jikui@hebtu.edu.cn}
\thanks{The author was supported Supported by National Natural
Science Foundation of China (Grant No. 11471094) and Hebei Natural Science Foundation for Distinguished Young Scholars (Grant No. A201605219) }.

\subjclass[2000]{Primary 47C15, 47B37; Secondary 47B48, 47L40}



\keywords{$C^*$-algebras, unitary equivalence, curvatures, extended
holomorphic curves,Cowen-Douglas operators.}

\begin{abstract}
For $\Omega\subseteq \mathbb{C}$  a connected open set, and
${\mathcal U}$  a unital $C^*$-algebra, let  ${\mathcal I}
({\mathcal U})$ and ${\mathcal P}({\mathcal U})$  denote the sets of
all idempotents and projections in ${\mathcal U}$ respectively.
${\mathcal P}({\mathcal U})$ is called as the Grassmann manifold of
$\mathcal U$ and ${\mathcal I} ({\mathcal U})$ is called as the
extended Grassmann manifold. If $P:\Omega \rightarrow {\mathcal
P}({\mathcal U})$ is a real-analytic ${\mathcal U}$-valued map which
satisfies $\overline{\partial} PP=0$, then $P$ is called
  a  holomorphic curve on  ${\mathcal P}({\mathcal U})$.
   In this note, we will define the
formulaes  of curvature and it's covariant derivatives for
holomorphic curves on $C^*$-algebras. It can be regarded as the
generalization of curvature and it's covariant derivatives of the
classical holomorphic curves.   By using the curvature formulae, we
give the unitarily and similarity  classifications for the holomorphic  curves and extended
holomorphic curves on $C^*$-algebras respectively.  And we also give a
description of the trace of the covariant derivatives of curvature for any
Hermitian holomorphic vector bundles. As applications, we also
discuss the relationship between holomorphic curves, extended
holomorphic curves, similarity of holomorphic Hermitian vector
bundles and similarity of Cowen-Douglas operators.
\end{abstract}

\maketitle

\section{Introduction}

In this note, we will give the  formulaes of curvature and it's
covariant derivatives for  holomorphic curves in  Grassmann
manifolds in a $C^*$-algebraic setting and give the unitarily and similarity  classifications for the holomorphic  curves and extended
holomorphic curves on $C^*$-algebras respectively. 

In Cowen-Douglas theory, holomorphic curve in Grassmann manifold is
a basic and important concept. Let $\mathcal H$ be a complex
separable Hilbert space and $Gr(n,{\mathcal H})$ denote
$n$-dimensional Grassmann manifold, the set of all $n$-dimensional
subspaces of ${\mathcal H}$.  A map $f: \Omega \rightarrow
Gr(n,{\mathcal H})$ is called as a holomorphic curve, if there exist
$n$ holomorphic ${\mathcal H}$-valued functions $ e_1 ,e_2 ,\ldots,
e_n$ on ${\Omega}$ such that $f(\lambda)=\bigvee
\{e_1(\lambda),\ldots, e_n(\lambda)\}$ for each $\lambda\in \Omega$,
where symbol ``$\bigvee$'' denotes the closure of linear span (See
\cite{CD}).    And the concept of the  holomorphic curve on
$C^*$-algebras was first introduced by M. Martin and N. Salinas in
\cite{MS1}.  Let ${\mathcal U}$ be a
unital
  $C^*$-algebra, then $p\in {\mathcal U}$ is called a projection in ${\mathcal U}$ whenever
 $p^2=p=p^*$, and
 $ {\mathcal P}({\mathcal U})$ denote the set of all projections in ${\mathcal U}$ which is called Grassmann manifold of ${\mathcal U}$.
 Let $\Omega\subseteq \mathbb{C}$ be a connected open set. If $P:\Omega \rightarrow   {\mathcal P}({\mathcal
 U})$ is a real-analytic ${\mathcal U}$-valued map, then it is called an holomorphic curve on ${\mathcal P}({\mathcal U})$
 ( in order to discriminate ordinary holomorphic curve). Let ${\mathcal I}(
{\mathcal U})$ denote the set of all of the idempotents in
${\mathcal U}$.  ${\mathcal I}({\mathcal U})$  is called as the
extended Grassmann manifold.  We say a real-analytic map ${\mathcal I}:\Omega\rightarrow {\mathcal
I}({\mathcal U})$ as an extended holomorphic curve if the following
statements hold $$ \overline{\partial} {\mathcal I}(\lambda)={\mathcal
I}(\lambda)\overline{\partial} {\mathcal I}(\lambda), \partial{\mathcal I}(\lambda)=\partial{\mathcal I}(\lambda){\mathcal
I}(\lambda),\overline{\partial} {\mathcal I}(\lambda){\mathcal
I}(\lambda)=0,{\mathcal I}(\lambda){\partial} {\mathcal
I}(\lambda)=0, \forall \lambda \in \Omega.$$

This class of holomorphic curves in a $C^*$-algebraic setting has
been studied by C. Apostol, M. Martin, N. Salinas and D. R. Wilkins
in a number of articles\cite{AM,M,MS1,MS2,MS3,S,W}.  In 1981, a
$C^*$-algebra approach to Cowen-Dougals theory was given by C.
Apostol and M. Martin (cf. \cite{AM}).  And M. Martin and N. Salinas
did a series work of holomorphic curves on extended flag manifolds
and extended Grassmann manifolds (\cite{M,MS1,MS2,MS3,S}). This
kind of researches on  holomorphic curves can be regarded as one of
generalization of Cowen-Douglas theory on $C^*$-algebras.

  In the paper \cite{CD}, M. J. Cowen and R. G. Douglas introduced a
class of operators related to complex geometry now referred to as
Cowen-Douglas operators (See Example \ref{exinfinite}). There exists
a natural connection between  holomorphic curves and this class of
operators. For $\mathcal{H}$  a complex and separable Hilbert space,
let $\mathcal{L}(\mathcal{H})$ be the set of bounded linear
operators on $\mathcal{H}$. Let $\Omega$ be a open connected subset
of complex plane $\mathbb{C}$. A class of Cowen-Douglas operator
with index $n$: $B_n(\Omega)$ is defined as follows \cite{CD}:
$$\begin{array}{lll}B_n(\Omega)=:\{T\in \mathcal{L}(\mathcal{H}):
&(i)\,\,\Omega\subset \sigma(T)=:\{\lambda\in \mathbb{C}:T-\lambda
I~~
\mbox{is not invertible}\},\\
&(ii)\,\,\bigvee_{\lambda\in \Omega}\mbox{Ker}(T-\lambda)=\mathcal{H},\\
&(iii)\,\,\mbox{Ran}(T-\lambda)=\mathcal{H},\\
&(iv)\,\,\mbox{dim Ker}(T-\lambda)=n, \forall\lambda\in\Omega.\}
\end{array}$$
For any operator $T\in B_n(\Omega)$, it is shown that one can find a
holomorphic family of eigenvectors $\{e_i(\lambda), \lambda\in
\Omega\}^n_{i=1}$ such that $Te_i(\lambda)=\lambda e_i(\lambda),$
for any $\lambda\in \Omega$. A holomorphic curve with $n$ dimension
is a map from $\mathcal{H}$ to Grassmann manifold
$Gr(n,\mathcal{H})$ defined as $f(\lambda)=:\bigvee
\{e_i(\lambda),i=1,2,\cdots, n\}$ for $\lambda\in \Omega$.

 M. J. Cowen and R. G. Douglas obtained a unitarily 
classification theorem of holomorphic curves in \cite{CD}. It is proved that
 curvature function and it's derivative are unitarily invariants of the
holomorphic curves and Cowen-Doulgas operators by means of complex
hermitian geometry techniques.

 For any
Cowen-Douglas operator $T$, there exists a Hermitian holomorphic
bundle $E_T$ with the fiber $f(\lambda), \lambda\in\Omega$.  We call
two linear bounded operators $T$ and $S$ are unitarily equivalent if
and only if there exists a unitary operator $U\in
\mathcal{L}(\mathcal{H})$ such that $T=USU^*$ (denoted by $T\sim_{u}
S$). For two holomorphic curves $F$ and $G$ defined on $\Omega$, if
there exists a unitary operator $U\in \mathcal{L}(\mathcal{H})$ such
that $f(\lambda)=Ug(\lambda),\forall \lambda\in \Omega$, then we
call them are unitarily equivalent (denoted by $f\sim_u g$.

In \cite{CD}, it is shown that unitary equivalence of operator $T$
can be deduced to the same problem of holomorphic curve $F$
associate to it. Following M. I. Cowen and R. G. Doulgas \cite{CD},
a curvature function for $T\in B_n(\Omega)$ can be defined as:

$$K_T(\lambda)
=-\frac{\partial}{\partial \overline{\lambda}}(h^{-1}\frac{\partial
h}{\partial \lambda}), \mbox{for all}~ \lambda\in \Omega,$$

where the metric
$$h(\lambda)=(<e_j(\lambda),e_i(\lambda)>)_{n\times n},\forall
\lambda\in \Omega,$$ and
$\{e_1(\lambda),e_2(\lambda),\cdots,e_n(\lambda)\} $ are the frames
of $E_T$.

 Let $E_T$ be a
Hermitian holomorphic bundle induced by a Cowen-Douglas operator
$T$, and $K_T$ be a curvature of $T$. Then covariant  partial
derivatives of curvature $K_T$ are defined as the following:

 (1)\,\,$K_{T,\overline{z}}=\frac{\partial}{\overline{\partial}\lambda}(K_{T});$

 (2)\,\,$K_{T,z}=\frac{\partial}{\partial \lambda}(K_T)+[h^{-1}\frac{\partial}{\partial \lambda}h,K_{T}].$

 A
remarkable result is also proved in \cite{CD}: For $T,S \in
B_1(\Omega)$, $T\sim_u S $ if and only if $K_T=K_S$ on $\Omega$.

Let  $T_1,T_2\in B_n(\Omega)$. Then $T_1\sim_{u} T_2$ if and only
if there exists an isometry $V: E_{T_1}\rightarrow E_{T_2}$ such
that
$$VK_{T_1,z^i\overline{z}^j}=K_{T_2,z^i\overline{z}^j}V, i,j=0,1,\cdots,n-1.$$

Subsequently, the curvature function turns into an important object
of the research of
 Cowen-Douglas operators. R. G. Douglas,A. Kor\'{a}nyi, G. Misra, K. Guo, D. N. Clark, M. Uchiyama, H. Kwon, S. Treil,  L. Chen, and S. S. Roy
  and many other mathematicians
 did a lot of work around the curvature  (cf.\cite{CDG,CM1,CM2,CM3,CD,Kwon2,KM1,KM2,Dinesh,Kwon1,Misra1,Misra2,Misra3,Sarkar,U} ). On the
other hand, by using $K_0$-group, C. Jiang, X. Guo and K. Ji
 concerned the problems of similarity classification of Cowen-Douglas operators and some
holomorphic curves(cf. \cite{Jikui,Jiang,JGJ,JJ}).

Same to the progress of researches of Cowen-Douglas operators, we also start
from the unitarily equivalence of this kind of holomorphic curves.
Let $P,Q:\Omega\rightarrow P({\mathcal U})$ be two
holomorphic curves on same C*-algerba. We say that $P$ and $Q$ are unitarily equivalent
(denoted by $P\stackrel{u}{\sim}Q$) if there exists a fixed unitary
$U\in {\mathcal U}$ such that $P(\lambda)=UQ(\lambda)U^{*},\forall
\lambda\in\Omega$ (\cite{MS1}).

In \cite{MS1}, M. Martin and N. Salinas give the unitarily
invariants of extended holomorphic curve $P$  by considering the
partial derivatives $\partial^IP\overline{\partial}^JP$, $I,J\in
\mathds{N}$.  As we mentioned above, the important and interesting
part of the researches in holomorphic curves and Cowen-Douglas
operators is the intrinsic connection with complex geometry. One can
decide the unitarily equivalence of two operators by calculating
their curvatures and it's covariant derivatives. From this view
point, we also need to search for the geometry unitarily invariants of
holomorphic curve on $C^*$-algebras. So a natural question is the
following :

{\bf Question }\,\, {\it What is the curvature and it's covariant
derivatives for the  holomorphic curves on $C^*$-algebras? Are they also the unitarily invariants of  holomorphic curves on
$C^*$-algebras?}

 To answer this question,
we want to characterize the curvature and it's covariant partial
derivatives's formulaes and unitary equivalence problem of 
holomorphic curves in $C^*$-algebras with these geometry concepts.

On the other hand, people also consider the  similarity
classification of holomorphic curves and Cowen-Douglas operators. As
we all known, curvature is not the similarity invariant for holomorphic in the classical case(cf
\cite{CM1,CM2}). In \cite{JJ}, we give a similarity classification
of holomorphic curves involving the $K_0$ group of the holomorphic
curve's commutant algebra. But we still have no any geometry
invariants for the similarity of holomorphic curves and
Cowen-Douglas operators.

In \cite{Kwon1},by considering the Hilbert-Schmidt norm of the
partial derivative of analytic projection (or the trace of the
curvature of corresponding operator (See details in \cite{HJK}), H.Kwon and S.Treil
characterize contractions with certain property that are similar to
the backward shift  in the Hardy space. This analytic projection is
also a kind of holomorphic curve on $C^*$-algebras. And this result
was also generalized to the weighted Bergman shift case by R. G.
Douglas, H. Kwon and S. Treil (cf \cite{Kwon2}). 

In this paper, we will give a similarity classification  of  extended holomorphic curves on $C^*$-algebras
  by using
this curvature formulaes. As an application, we also describe the trace of derivatives of curvatures for
Cowen-Dougals operators in the form of holomorphic curves on
$C^*$-algebras.

 The paper is organized as follows. In \S1 some notations and known results will be
introduced.  In \S2, we define the curvature and it's covariant
derivatives of the holomorphic curves and extended holomorphic curves and we also give 
unitarily classification and similarity classification theorem of holomorphic curves and extended holomorphic  curves on
$C^*$-algebras by using these curvature and covariant derivatives. In
\S3, we will discuss the relationship between curvature formulae of 
extended holomorphic curves and similarity of Cowen-Douglas operators.
In \S4, We will discuss the applications.

We will introduce some notations and results first, and all the
notations are adopted from \cite{CD}, \cite{Jikui} and \cite{MS1}.

To simplify the notation,    we use the  symbol
``$\overline{\partial}^J\partial^{I}$''  denotes  partial derivative
`` $\frac{\partial^{J+I}}{\partial^J
\overline{\lambda}\partial^{I}\lambda} $'', where $I,J$ are
non-negative integers. And for any $I$ and $J$,

(1)\,\,symbol $\overline{\partial}^J$ stands for
$\overline{\partial}^J\partial^0$ and  $\partial^I$ stands for
$\overline{\partial}^0\partial^I$,

(2)\,\,symbol  $\partial$ stands for $\partial^1$, and
$\overline{\partial}$ stands for $\overline{\partial}^1$,

(3)\,\, $\overline{\partial}^J\partial^{I}P=P$, when $J=I=0$.

Firstly, we  need a criterion for determining the holomorphic map
from $\Omega$ to ${\mathcal P({\mathcal U})}$.

\subsection{}\label{defn1} \cite{MS1}\,\, Let ${\mathcal U}$ be a unital $C^{*}$-algebra.
Let $P:\Omega\rightarrow {\mathcal P({\mathcal U})}$ be a ${\mathcal
U}$-valued infinitely differentiable map. Then $P$ is called
holomorphic if and only if  \begin{align*}\overline{\partial}
P(\lambda)=P(\lambda)\overline{\partial} P(\lambda), \forall \lambda
\in \Omega.
 \tag{1.1}\label{1.1} \end{align*}

Since $P(\lambda)$ is a projection, for any $\lambda\in \Omega$,
we can get that
$$\overline{\partial} P(\lambda)=[\overline{\partial} P(\lambda)]P(\lambda)+P(\lambda)[\overline{\partial} P(\lambda)].$$
So \ref{1.1} is equivalent to say that
$$[\overline{\partial}P(\lambda)]P(\lambda) =0\Longleftrightarrow
\partial P(\lambda)=[\partial
P(\lambda)]P(\lambda)\Longleftrightarrow P(\lambda)\partial
P(\lambda)=0, \forall \lambda\in \Omega.$$  By a direct computation,
we also have
 \begin{align*}\overline{\partial}\partial^{J}P=\partial^{J}P
\overline{\partial}P-\overline{\partial}P\partial^{J}P
-\sum\limits_{k=1}^{J-1}C^k_J(\partial^{J-K}P\overline{\partial}P\partial^k
P), \forall J\in \mathbb{Z}^+. \tag{1.2}\label{1.2} \end{align*}

 \begin{align*}\overline{\partial}^{I}\partial P=\partial P
\overline{\partial}^IP-\overline{\partial}^IP\partial P
-\sum\limits_{k=1}^{I-1}C^k_I(\overline{\partial}^{I-K}P
\partial P\overline{\partial}^k P), \forall I\in \mathbb{Z}^+. \tag{1.3}\label{1.3} \end{align*}
and
$$\overline{\partial}^JPP=P\partial^IP=0,\forall I, J\in
\mathbb{N}.$$ For the general case, each derivative
$\overline{\partial}^J\partial^IP, I, J\in \mathbb{N}$ may be
expressed as a sum of monnomials of the form (See more details in
\cite{MS1})
$$\pm
[\overline{\partial}^{I_1}P][\partial^{J_1}P]\cdots[\overline{\partial}^{I_k}P][\partial^{J_k}P]$$
and
 \begin{align*}\pm
[\partial^{J_1}P][\overline{\partial}^{I_1}P]\cdots[\partial^{J_k}P][\overline{\partial}^{I_k}P]
\tag{1.4}\label{1.4}
\end{align*}

\subsection{}
 Let ${\mathcal U}$ be a unital $C^{*}$-algebra, and
$P:\Omega\rightarrow {\mathcal P({\mathcal U})}$ be an 
holomorphic curve.
 For each $\lambda\in \Omega$ and every $\tau\in \mathbb{Z}_{+}\cup \{\infty\}$,
 set
 $${\mathcal B}^{\tau}_{\lambda}=\{\overline{\partial}^{J}P(\lambda)\partial^{I}P(\lambda):I,J\in \mathbb{Z}_{+}, I,J\leq \tau\}.$$
 Let
${\mathcal U}^{\tau}_{\lambda}$ be the closure of $*$-subalgebra of
$\mathcal U$ generated by ${\mathcal B}^{\tau}_{\lambda}$ with the
following property:
 $$ {\mathcal U}^{0}_{\lambda}\subseteq {\mathcal U}^{1}_{\lambda}\subseteq\cdots \subseteq {\mathcal U}^{\infty}_{\lambda}.$$

By using notations mentioned above, M. Martin and  N. Salinas
defined  a substitute in $C^{*}$-algebra for  Cowen-Douglas class
$B_n(\Omega)$:

\begin{defn}\cite{MS1}\label{Uinfi} Let $k\geq 1$ be an integer. If the following
conditions are satisfied, then  holomorphic curve
$P:\Omega\rightarrow P({\mathcal U})$ is said to be in the class
${\mathcal A}_{k}(\Omega,{\mathcal U}):$

$(1)$\,\, For each $\lambda\in \Omega$, ${\mathcal
U}^{\infty}_{\lambda}$ is a finite-dimensional $C^*$-algebra.

$(2)$\,\,If $k_{\lambda}$ denotes the cardinal of any maximal
collection of mutually orthogonal minimal projections in ${\mathcal
U}^{\infty}_{\lambda}$, then $$k_{\lambda}\leq k.$$

$(3)$\,\,If $a\in {\mathcal U}$ and $aP(\lambda)=0$ for every
$\lambda\in\Omega$, then $a=0$.
\end{defn}

\begin{defn}\cite{MS1} Let $\lambda\in\Omega$ and $\alpha\in \mathbb{Z}_{+}$
be a fixed integer. We say that $P$ and $Q$ {\bf have order of
contact $\alpha$ at $\lambda$ }if there exists a unitary $\nu$ such
that
$$\nu\overline{\partial}^{J}P(\lambda)\partial^{I}P(\lambda)\nu^{*}=\overline{\partial}^{J}Q(\lambda)\partial^{I}Q(\lambda),~
\forall 0\leq I,J\leq \alpha, \eqno(1.5)\label{1.5}$$
\end{defn}

We say $\mathfrak{G}\subset {\mathcal U}$ is a {\bf separating
subset} of ${\mathcal U}$, if $\{a\in {\mathcal U}:as=0, s\in
\mathfrak{G} \}=\{0\}.$ Assume $\mathfrak{G}$, $\mathfrak{T}$ are
two separating subsets of ${\mathcal U}$,
$\theta:\mathfrak{G}\rightarrow \mathfrak{T}$ is a given bijection.

M.Martin and  N.Salinas proved the following related rigidity
theorem for ${\mathcal A}_k(\Omega,{\mathcal U})$ class on
$C^{*}$-algebra.

\begin{thm}\label{MSlemma}[Theorem 4.5, \cite{MS1}] Let ${\mathcal U}$ be a unital C*-algebra. Suppose that 
holomorphic curves $P,Q:\Omega\rightarrow {\mathcal P({\mathcal
U})}$ belong to the class  ${\mathcal A}_k(\Omega,{\mathcal U})$.
Then the following two statements are equivalent:

$(1)$\,\,$P$ and $Q$ are unitarily equivalent;

$(2)$\,\,$P$ and $Q$ have order of contact $\alpha$ at each
$\lambda\in \Omega$.
\end{thm}

\section{Curvature formulae of the  holomorphic curves and extended holomorphic curves on C*-algebras}

\subsection{Holmorphic curves}

\begin{defn}\label{holomorphic}
Let $\Omega$ be a connected open subset of $\mathbb{C}$ and
${\mathcal U}$ be a unital $C^*$-algebra.  Let $P:\Omega\rightarrow
{\mathcal P({\mathcal U})}$ be an  holomorphic curve.

We say ${\mathscr K}_{i,j}(P), 0\leq i,j$ defined as the following
to be the curvature and covariant derivatives of curvature of the
 holomorphic curve $P$, where
 \begin{align*}  {\mathscr K}(P):={\mathscr K}_{0,0}(P)=\overline{\partial}P{\partial P}, {\mathscr K}_{i+1,j}(P)=P(\partial ({\mathscr K}_{i,j}(P))),
{\mathscr K}_{i,j+1}(P)=(\overline{\partial}({\mathscr
K}_{i,j}(P)))P, \forall i,j\in \mathbb{Z}_{+}. \end{align*}
\end{defn}

Let  $B$ be a unital $C^*$-algebra. A Hilbert $B$-module
$l^2(\mathbb{N},B)$ is defined as $$l^2(\mathbb{N},B)=:\{(a_i)_{i\in
\mathbb{N}}:a_i\in B,\forall i\in \mathbb{N},
\mbox{and}\sum\limits_{i\in \mathbb{N}}||a_i||^2<\infty\}.$$ We
denote the set of all the linear bounded operators on
$l^2(\mathbb{N},B)$ by ${\mathscr L}(l^2(\mathbb{N},B))$. Then
${\mathscr L}(l^2(\mathbb{N},B))$ is a $C^*$-algebra.

 By using the notations
introduced above, we will give the following  holomorphic
curve class which we concern in this paper.

\begin{defn}\label{Pn}
Let $B$ be a unital  $C^*$-algebra. For ${\mathcal U}={\mathscr L}(l^2(\mathbb{N},B))$.
Let 
${\mathcal P}_n(\Omega ,{\mathcal U})$ denotes the orthogonal 
projection valued functions $P:\Omega\rightarrow {\mathcal U}$ which
satisfies:  $$RanP(\lambda)=\bigvee\{\alpha_i(\lambda),i=1,2,\cdots,n\}. $$ where 
 $\alpha_i:\Omega \rightarrow 
l^2(\mathbb{N},B), i=1,2\cdots,n$ are holomorphic functions.

Define $\alpha : \Omega\rightarrow 
{\mathscr L}(\mathbb{C}^n, l^2(\mathbb{N},B))$ as follows 
$$\alpha(\lambda)(w_1,w_2,\cdots,w_n)=\sum\limits_{i=1}^nw_i\alpha_i(\lambda), \forall w_i\in {\mathbb C},  \lambda\in \Omega.$$
Then
$$P(\lambda)=\alpha(\lambda)\cdot(\alpha^{*}(\lambda)\cdot\alpha(\lambda))^{-1}\cdot\alpha^{*}(\lambda),
\forall \lambda \in \Omega,$$ 

\end{defn}

 \begin{prop}\label{hlc}
Let $B$ be a unital $C^*$-algebra and ${\mathcal U}={\mathscr
L}(l^2(\mathbb{N},B))$.  Then the $C^{\infty}$ map
$P:=\alpha\cdot(\alpha^{*}\cdot\alpha)^{-1}\cdot\alpha^{*}$ defined
in Definition \ref{Pn} is an  holomorphic curve.
\end{prop}

\begin{proof}

Firstly, we have that $P(\lambda)$ is orthogonal projection for any
$\lambda\in \Omega.$

 By Definitions in \ref{defn1},  we only need to prove that
 $P$ satisfies the formulae \ref{1.1}. Note that $\overline{\partial}\alpha=0$ and $h=\alpha^{*}\cdot\alpha$.
 Then
$$\begin{array}{llll} \overline{\partial}P
P&=&\overline{\partial}(\alpha(\alpha^{*}\cdot\alpha)^{-1}\cdot\alpha^{*})(\alpha\cdot(\alpha^{*}\cdot\alpha)^{-1}\cdot\alpha^{*})\\
&=&\overline{\partial}(\alpha h^{-1}\alpha^{*})(\alpha h^{-1}\alpha^{*})\\
&=&(\alpha \overline{\partial}( h^{-1})\alpha^{*}+ \alpha h^{-1}\overline{\partial}\alpha^{*})(\alpha h^{-1}\alpha^{*})\\
&=&(\alpha \overline{\partial}( h^{-1})\alpha^{*}\cdot\alpha h^{-1}\alpha^{*}+ \alpha h^{-1}\overline{\partial}\alpha^{*}\cdot\alpha h^{-1}\alpha^{*}\\
&=&0.
\end{array}$$ This finishes the proof of the Proposition.
\end{proof}

\begin{ex}\label{exinfinite}\cite{Jikui}  Let $E(\lambda),\lambda\in \mathbb{D}$ be an analytic family of
subspaces of Hilbert space ${\mathcal H}$ (or holomorphic curve).
And let $P(\lambda)$ be the orthogonal projection onto $E(\lambda)$.
Then $P: \mathbb{D} \rightarrow P({\mathcal L}({\mathcal H}))$ is an
 holomorphic curve (cf \cite{MS1}). As we all known, the
subspace $E(\lambda)$ is equal to the range of $F(\lambda)$ where
$F$ is a left invertible analytic operator-valued function. And
$$P=F(F^*F)^{-1}F^*.$$
In the example above, when we assume ${\mathcal U}={\mathcal
L}({\mathcal H})$, we can see that $\{\alpha_i(\lambda)\}^{n}_{i=1}$
are the frames of $E(\lambda)=Ran F(\lambda)$ for any $\lambda\in
\mathbb{D}$.

 For the finite dimension case, let ${\mathcal U}$ be
$M_2(\mathbb{C})$ and $\Omega\subseteq \mathbb{C}$ be a connected
open set, and let $P:\Omega\rightarrow M_2(\mathbb{C})$ defined by
$$P(\lambda)=\dfrac{1}{1+|\lambda|^2}\left(\begin{array}{cccc}
1&\overline{\lambda}\\
\lambda&|\lambda|^2
\end{array}\right), \forall \lambda\in \Omega.$$
Then $P$ is called Bott projection in algebra K-theory. When we
assume that $\alpha(\lambda):=(1,\lambda)^T\in \mathbb{C}^2$ and
$\alpha^*(\lambda):=(1,\overline{\lambda})$, then we have
 $$P(\lambda)=\alpha(\lambda)\cdot(\alpha^{*}(\lambda)\cdot\alpha(\lambda))^{-1}\cdot\alpha^{*}(\lambda), \forall \lambda\in \Omega,$$
where $\alpha^{*}(\lambda)\cdot\alpha(\lambda)=1+|\lambda|^2$. And
$P$ is an  holomorphic curve on $\Omega$.

\end{ex}

\begin{defn}\label{excurvature} Let  $P\in {\mathcal P}_n(\Omega, {\mathcal U})$. Considering
$l^2(\mathbb{N},B)$ is a  Hilbert C*-module, denote the metric
 $$h(\lambda)=<\alpha(\lambda),\alpha(\lambda)>=\alpha^{*}(\lambda)\cdot\alpha(\lambda).$$
  An curvature function of $P$ is defined as
$$K_P=-\frac{\partial}{\partial \overline{\lambda}}(h^{-1}\frac{\partial h}{\partial \lambda}), \mbox{for all}~ \lambda\in \Omega.$$
And the partial derivatives of curvature are defined as the
following:

 (1)\,\,$K_{P,\overline{\lambda}}=\frac{\partial}{\overline{\partial}\lambda}(K_{P});$

 (2)\,\,$K_{P,\lambda}=\frac{\partial}{\partial \lambda}(K_P)+[h^{-1}\frac{\partial}{\partial \lambda}h,K_{P}],~\mbox{for all}~ \lambda\in \Omega. $

By the definition above,  we can get the partial derivatives of
curvature: $K_{P,\lambda^i\overline{\lambda}^j}$, $i,j\in
\mathbb{N}\cup \{0\}$ by using the inductive formulaes above. And
this curvature function and the partial derivatives of curvature
function are same to the curvature of Cowen-Douglas operator in
form.
\end{defn}

In the following parts of this paper, for the sake of simplicity of
the notations, we will also use $\alpha \alpha^*$ and $\alpha^*
\alpha$ instead of $\alpha \cdot \alpha^*$ and $\alpha^* \cdot
\alpha$ respectively.

\begin{lem}\label{main} Let $P\in {\mathcal P}_n(\Omega, {\mathcal
 U})$. And there exist $\alpha : \Omega\rightarrow 
{\mathscr L}(\mathbb{C}^n, l^2(\mathbb{N},B))$ such that
$$P(\lambda)=\alpha(\lambda)\cdot(\alpha^{*}(\lambda)\cdot\alpha(\lambda))^{-1}\cdot\alpha^{*}(\lambda).$$
Then ${\mathscr K}_{i,j}(P):\Omega\rightarrow {\mathcal U},$ $
i,j=0,1,\cdots,n$ defined in Definition 2.1
 satisfy  the following conclusion:
$${\mathscr K}_{i,j}(P)(\lambda)=\alpha(\lambda)(-K_{P,z^i,{\overline
 z}^j})h^{-1}\alpha^*(\lambda), \forall \lambda\in \Omega,$$
 where $h=\alpha^*\cdot \alpha$.

\end{lem}

\begin{proof}

Let
$P(\lambda)=\alpha(\lambda)\cdot(\alpha^{*}(\lambda)\cdot\alpha(\lambda))^{-1}\cdot\alpha^{*}(\lambda)=\alpha(\lambda)\cdot
h^{-1}(\lambda)\cdot\alpha^{*}(\lambda), \forall \lambda\in \Omega.$
And set $h(\lambda)=\alpha^{*}(\lambda)\cdot\alpha(\lambda).$

 Then we have the following
claim:

{\bf Claim 1}\,\,
\begin{align*}\partial^I P=(\partial^I \alpha
h^{-1} +C^1_{I}\partial^{I-1}\alpha\partial
h^{-1}+\cdots+\C^{k}_{I}\partial^{I-k}\alpha \partial^k
h^{-1}+\cdots+\alpha \partial^{I}h^{-1})\alpha^*, \forall I\in
\mathds{N}, \tag{2.3}\label{2.3}
\end{align*}

\begin{align*} \overline{\partial}^J P=\alpha(\overline{\partial}^J  h^{-1}\alpha^*
+C^1_{J}\overline{\partial}^{J-1}
h^{-1}\overline{\partial}\alpha^*+\cdots+C^k_{J}\overline{\partial}^{J-k}
h^{-1}\overline{\partial}^k\alpha^*+\cdots+
h^{-1}\overline{\partial}^J\alpha^*), \forall J\in \mathds{N}.
\tag{2.4}\label{2.4}
\end{align*}

 Since $(\partial^I P)^*=\overline{\partial}^I P, \forall
I\in \mathds{N},$ then we only need to prove the formulae \ref{2.4}.
When $I=1$, note that $\overline{\partial}\alpha=0$, we have that
$$\begin{array}{llll}\overline{\partial} P&=&\overline{\partial} (\alpha\cdot
h^{-1}\cdot\alpha^{*})\\
&=&\alpha \overline{\partial}h^{-1}\alpha^*+\alpha
h^{-1}\overline{\partial}\alpha^*. \end{array}$$

By induction proof, suppose the following formulaes hold:
$$\overline{\partial}^{J-1} P=\alpha(\overline{\partial}^{J-1}  h^{-1}\alpha^*
+C^1_{J-1}\overline{\partial}^{J-2}
h^{-1}\overline{\partial}\alpha^*+\cdots+C^k_{J-1}\overline{\partial}^{J-k-1}
h^{-1}\overline{\partial}^k\alpha^*+\cdots+
h^{-1}\overline{\partial}^{J-1}\alpha^*).$$ Then we have
$$\begin{array}{llll}\overline{\partial}( \overline{\partial}^{J-1} P)&=&
\overline{\partial}(\alpha(\overline{\partial}^{J-1}  h^{-1}\alpha^*
+\cdots+C^k_{J-1}\overline{\partial}^{J-k-1}
h^{-1}\overline{\partial}^k\alpha^*+\cdots+
h^{-1}\overline{\partial}^{J-1}\alpha^*) )\\
&=&\alpha(\overline{\partial}^{J}  h^{-1}\alpha^*
+\overline{\partial}^{J-1}
h^{-1}\overline{\partial}\alpha^*+\cdots+\overline{\partial}(C^k_{J-1}\overline{\partial}^{J-k-1}
h^{-1}\overline{\partial}^k\alpha^*)+\cdots\\
&+&
\overline{\partial}h^{-1}\overline{\partial}^{J-1}\alpha^*+h^{-1}\overline{\partial}^{J}\alpha^*).
\end{array}$$
Note that $$
\overline{\partial}(C^k_{J-1}\overline{\partial}^{J-k-1}
h^{-1}\overline{\partial}^k\alpha^*)=C^k_{J-1}\overline{\partial}^{J-k}
h^{-1}\overline{\partial}^k\alpha^*+C^k_{J-1}\overline{\partial}^{J-k-1}
h^{-1}\overline{\partial}^{k+1}\alpha^*,$$ $$
\overline{\partial}(C^{k+1}_{J-1}\overline{\partial}^{J-k-2}
h^{-1}\overline{\partial}^{k+1}\alpha^*)=C^{k+1}_{J-1}\overline{\partial}^{J-k-1}
h^{-1}\overline{\partial}^{k+1}\alpha^*+C^{k+1}_{J-1}\overline{\partial}^{J-k-2}
h^{-1}\overline{\partial}^{k+2}\alpha^*$$ and
$$ C^k_{J-1}\overline{\partial}^{J-k-1}
h^{-1}\overline{\partial}^{k+1}\alpha^*+C^{k+1}_{J-1}\overline{\partial}^{J-k-1}
h^{-1}\overline{\partial}^{k+1}\alpha^*=C^{k+1}_{J}\overline{\partial}^{J-k-1}
h^{-1}\overline{\partial}^{k+1}\alpha^*$$

Then we have $$\overline{\partial}^J P=\alpha(\overline{\partial}^J
h^{-1}\alpha^* +C^1_{J}\overline{\partial}^{J-1}
h^{-1}\overline{\partial}\alpha^*+\cdots+C^k_{J}\overline{\partial}^{J-k}
h^{-1}\overline{\partial}^k\alpha^*+\cdots+
h^{-1}\overline{\partial}^J\alpha^*), \forall J\in \mathds{N}.$$ So
we finish the proof of Claim 1.

{\bf Claim 2}\,\, \begin{align*}\overline{\partial} P \partial P
=\alpha(-K_P)h^{-1}\alpha^*;\tag{2.5}\label{2.5}\end{align*}
\begin{align*} \overline{\partial}
P
\partial^2 P=\alpha(-(K_P)_z)h^{-1}\alpha^*;\tag{2.6}\label{2.6}\end{align*}
\begin{align*} \overline{\partial}^2P \partial
P=\alpha(-(K_P)_{\overline{z}})h^{-1}\alpha^*;\tag{2.7}\label{2.7}\end{align*}
\begin{align*}\overline{\partial}^2 P
\partial^2 P-2(\overline{\partial} P \partial
P)^2=\alpha(-(K_P)_{z\overline{z}})h^{-1}\alpha^*;\tag{2.8}\label{2.8}\end{align*}

In fact, $$\begin{array}{llll} \overline{\partial} P\partial P
&=&(\alpha \overline{\partial}h^{-1}\alpha^*+\alpha
h^{-1}\overline{\partial}\alpha^*)(\partial \alpha h^{-1}\alpha^*
+\alpha\partial
h^{-1}\alpha^*)\\
&=&\alpha(\overline{\partial}h^{-1}\partial h
+h^{-1}\overline{\partial}{\partial}h)h^{-1}\alpha^*
\end{array}
$$
Since $K_P=-(\overline{\partial}h^{-1}\partial h
+h^{-1}\overline{\partial}{\partial}h),$ then we obtain the formulae
\ref{2.5}. And also note that  $\theta_P=h^{-1}\partial h,$ then we
have that
$$\begin{array}{llll}
\overline{\partial} P\partial^2P &=&(\alpha
\overline{\partial}h^{-1}\alpha^*+\alpha
h^{-1}\overline{\partial}\alpha^*)(\partial^2 \alpha h^{-1}\alpha^*
+2\partial^{1}\alpha\partial
h^{-1}\alpha^*+ \alpha \partial^2h^{-1}\alpha^*)\\
&=&\alpha(-\partial K_P+2\overline{\partial}h^{-1}\partial h\partial
h^{-1}h+2h^{-1}\partial\overline{\partial}h\partial
h^{-1}h+\overline{\partial}(h^{-1}\partial h h^{-1})\partial h-
\partial
h^{-1}\overline{\partial}\partial h)h^{-1}\alpha^*\\
&=&\alpha(-\partial K_P+2\overline{\partial}h^{-1}\partial h\partial
h^{-1}h+2h^{-1}\partial\overline{\partial}h\partial
h^{-1}h+\overline{\partial}h^{-1}\partial h h^{-1}\partial
h+h^{-1}\partial \overline{\partial}h h^{-1}\partial
h\\
&&+h^{-1}\partial h \overline{\partial}h^{-1}\partial h-
h^{-1}\partial h h^{-1}\overline{\partial}\partial h)h^{-1}\alpha^*\\
\end{array}
$$

Set $\partial h^{-1}h=-h^{-1}\partial h=-\theta_P$,  we have
$$\begin{array}{llll}
\overline{\partial} P\partial^2P &=&\alpha(-\partial
K_P+2\overline{\partial}h^{-1}\partial h\partial
h^{-1}h+2h^{-1}\partial\overline{\partial}h\partial
h^{-1}h+\overline{\partial}h^{-1}\partial h h^{-1}\partial
h+h^{-1}\partial \overline{\partial}h h^{-1}\partial
h\\
&&+h^{-1}\partial h \overline{\partial}h^{-1}\partial h+
h^{-1}\partial h h^{-1}\overline{\partial}\partial h)h^{-1}\alpha^*\\
&=&-\alpha(\partial K_P+(\overline{\partial}h^{-1}\partial h
+h^{-1}\partial\overline{\partial}h)\theta_P-\theta_P(
\overline{\partial}h^{-1}\partial h-
 h^{-1}\overline{\partial}\partial h))h^{-1}\alpha^*\\
 &=&-\alpha(\partial K_P+(\theta_PK_P-K_P\theta_P))h^{-1}\alpha^*\\
\end{array}
$$

By Definition \ref{excurvature}, we obtain
$$\begin{array}{llll}
(K_P)_z&=&\partial K_P+[\theta_P,K_P]\\
\end{array}$$

Then it follows that
$$\overline{\partial}
P
\partial^2 P=\alpha(-(K_P)_z)h^{-1}\alpha^*. $$

And $$\begin{array}{lllll} \overline{\partial}^2P\partial P&=&\alpha
\overline{\partial}(\overline{\partial}h^{-1}\alpha^*+h^{-1}\overline{\partial}\alpha^*)(\alpha\partial
h^{-1}+\alpha\partial h^{-1})\alpha^*\\
&=&\alpha
(\overline{\partial}^2h^{-1}\alpha^*+2\overline{\partial}h^{-1}\overline{\partial}\alpha^*+h^{-1}\overline{\partial}^2\alpha^*)(\alpha\partial
h^{-1}+\alpha\partial h^{-1})\alpha^*\\
&=&\alpha(-\overline{\partial} K_P +(\overline{\partial}^2 h^{-1} h+
2\overline{\partial}h^{-1}\overline{\partial}h+h^{-1}\overline{\partial}^2h)\partial
h^{-1}h)h^{-1}\alpha^*\\
&=&\alpha(-\overline{\partial} K
+\overline{\partial}(\overline{\partial}h^{-1}h+h^{-1}\overline{\partial}h)\partial
h^{-1}h)h^{-1}\alpha^*\\
&=&\alpha(-\overline{\partial} K_P
)h^{-1}\alpha^*\\
&=&\alpha (-K_{P}
)_{\overline{z}}h^{-1}\alpha^*.\\
\end{array}$$

Then it follows that
$$\overline{\partial}^2P\partial P=\alpha (-(K_{P}
)_{\overline{z}})h^{-1}\alpha^*. $$

And we also have \begin{align*}\begin{array}{llll}
\overline{\partial}^2P\partial^2 P&=&\alpha
(\overline{\partial}^2h^{-1}\alpha^*+2\overline{\partial}h^{-1}\overline{\partial}\alpha^*+h^{-1}\overline{\partial}^2\alpha^*)(\partial^2
\alpha h^{-1} +2\partial^{1}\alpha\partial
h^{-1}+ \alpha \partial^2h^{-1})\alpha^*\\
&=&\alpha(\overline{\partial}^2h^{-1}\partial^2
h+2\overline{\partial}^2h^{-1}\partial h \partial
h^{-1}h+\overline{\partial}^2h^{-1}h\partial^2h^{-1}h+2\overline{\partial}h^{-1}\overline{\partial}\partial^2h\\
&&+4\overline{\partial}h^{-1}\partial\overline{\partial}h\partial
h^{-1}h+2\overline{\partial}h^{-1}\overline{\partial}h\partial^2h^{-1}h+h^{-1}\overline{\partial}^2\partial^2h\\
&&+2h^{-1}\overline{\partial}^2\partial h \partial h^{-1}h+h^{-1}\overline{\partial}^2h\partial^2h^{-1}h)h^{-1}\alpha^*\\
&=&\alpha(\overline{\partial}^2h^{-1}\partial^2
h+2\overline{\partial}^2h^{-1}\partial h \partial
h^{-1}h+2\overline{\partial}h^{-1}\overline{\partial}^2\partial
h+4\overline{\partial}h^{-1}\partial\overline{\partial}h \partial
h^{-1}h\\
&&+h^{-1}\overline{\partial}^2\partial^2h+2h^{-1}\overline{\partial}^2\partial
h \partial h^{-1} h) h^{-1}\alpha^*.
\end{array}\tag{2.9}\label{2.9}\end{align*}

Recall that
$$\begin{array}{lll}-(K_P)_z&=&-(\partial
K_P+[\theta_P,K_P])\\
&=&\overline{\partial} h^{-1}\partial^2
h+2\overline{\partial}h^{-1}\partial h\partial h^{-1}
h+2h^{-1}\partial\overline{\partial}h \partial h^{-1}
h+h^{-1}\partial^2\overline{\partial}h.
\end{array}$$
and
\begin{align*}\begin{array}{lll}-(K_P)_{z\overline{z}}=-\overline{\partial}((K_P)_{z})&=&\overline{\partial}(\overline{\partial}
h^{-1}\partial^2 h+2\overline{\partial}h^{-1}\partial h\partial
h^{-1} h+2h^{-1}\partial\overline{\partial}h \partial h^{-1}
h+h^{-1}\partial^2\overline{\partial}h)\\
&=&\overline{\partial}^2h^{-1}\partial^2 h+\overline{\partial}h^{-1}\overline{\partial}\partial^2 h+2\overline{\partial}^2h^{-1}\partial h\partial h^{-1}h+2\overline{\partial}h^{-1}\partial\overline{\partial}h\partial h^{-1}h\\
&&+2\overline{\partial}h^{-1}\partial h \partial \overline{\partial}h^{-1}h+2\overline{\partial}h^{-1}\partial h \partial h^{-1} \overline{\partial}h +2\overline{\partial}h^{-1}\partial\overline{\partial}h\partial h^{-1}h\\
&&+2\overline{\partial}h^{-1}\partial\overline{\partial}^2h\partial
h^{-1}h+2h^{-1}\partial\overline{\partial}h\partial\overline{\partial}h^{-1}h
+2h^{-1}\partial\overline{\partial}h \partial
h^{-1}\overline{\partial}h\\
&&+\overline{\partial}h^{-1}\partial^2\overline{\partial}h+h^{-1}\partial^2\overline{\partial}^2
h.
\end{array}\tag{2.10}\label{2.10}\end{align*}

Note that \begin{align*}\begin{array}{llll}\
-2K^2_{P}&=&2(\overline{\partial}(h^{-1}\partial
h))(\overline{\partial}(\partial
h^{-1}h))\\&=&2(\overline{\partial}h^{-1}\partial h+h^{-1}\partial
\overline{\partial}h)(\partial \overline{\partial}h^{-1}h+\partial
h^{-1}\overline{\partial}h)\\
&=& 2(\overline{\partial}h^{-1}\partial
h
\partial \overline{\partial}h^{-1} h+ h^{-1}\partial
\overline{\partial}h
\partial\overline{\partial}h^{-1}h+\overline{\partial}h^{-1}\partial
h\partial h^{-1}\overline{\partial}h\\&&+h^{-1}\partial
\overline{\partial}h\partial h^{-1} \overline{\partial}h)
\end{array} \tag{2.11}\label{2.11}
\end{align*}

By formulaes \ref{2.9}, \ref{2.10} and \ref{2.11},  we can obtain
$$\begin{array}{lll}
\overline{\partial}^2P\partial^2
P+2\alpha(-K^2_{P})h^{-1}\alpha^*&=&
\overline{\partial}^2P\partial^2
P-2\alpha(-K_{P})h^{-1}\alpha^*\alpha(-K_{P})h^{-1}\alpha^*\\
&=&\overline{\partial}^2P\partial^2
P-2(\overline{\partial} P \partial P)^2\\
&=&-\alpha((K_P)_{z,\overline{z}})h^{-1}\alpha^*
\end{array}$$

Then it follows that
$$
\overline{\partial}^2 P
\partial^2 P-2(\overline{\partial} P \partial
P)^2=\alpha(-(K_P)_{z\overline{z}})h^{-1}\alpha^*.$$

 This finishes the proof of Claim 2.

By a direct computation, we can see that
 $${\mathscr K}_{0,0}(P)=\overline{\partial}P\partial P,
 {\mathscr K}_{1,0}(P)=\overline{\partial}^2P\partial P,
 {\mathscr K}_{0,1}(P)=\overline{\partial}P\partial^2 P, {\mathscr K}_{1,1}(P)=\overline{\partial}^2P\partial^2
 P-2(\overline{\partial}P\partial P)^2.$$
So conclusion of the lemma holds for $n=1$, by induction proof, we
assume that conclusion holds for $n\leq k$ and we will prove it also
holds for $n=k+1$ in the following.

Recall that
$$P(\lambda)=\alpha(\lambda)\cdot(\alpha^{*}(\lambda)\cdot\alpha(\lambda))^{-1}\cdot\alpha^{*}(\lambda)=\alpha(\lambda)\cdot
h^{-1}(\lambda)\cdot\alpha^{*}(\lambda), \forall \lambda\in
\Omega.$$

Set $$\partial P=F_1+F_2, F_1=\partial \alpha h^{-1}\alpha^*,
F_2=\alpha \partial h^{-1}\alpha^*;$$
$$\overline{\partial} P=G_1+G_2, G_1= \alpha  \overline{\partial} h^{-1}\alpha^*,
G_2=\alpha h^{-1} \overline{\partial}\alpha^*.$$

Now suppose that $i=k,$ or $j=k$ and
$${\mathscr K}_{i,j}(P)(\lambda)=\alpha(\lambda)(-(K_{P})_{z^i,{\overline
 z}^j})h^{-1}\alpha^*(\lambda), \forall \lambda\in \Omega.$$

 Then we have
 \begin{align*}\begin{array}{lll}
 \partial({\mathscr K}_{i,j}(P))&=&\partial (\alpha (-(K_{P})_{z^i,{\overline
 z}^j})h^{-1}\alpha^*)\\
 &=&\partial \alpha (-(K_{P})_{z^i,{\overline
 z}^j})h^{-1}\alpha^*+\alpha (-\partial (K_{P})_{z^i,{\overline
 z}^j})h^{-1}\alpha^*+\alpha (-(K_{P})_{z^i,{\overline
 z}^j})\partial h^{-1}\alpha^*.
 \end{array}\tag{2.12}\label{2.12}\end{align*}

 Note that
 \begin{align*}\begin{array}{llll}
 F_1({\mathscr K}_{i,j}(P))&=&\partial \alpha h^{-1}\alpha^*  (\alpha (-(K_{P})_{z^i,{\overline
 z}^j})h^{-1}\alpha^*)\\
 &=&\partial \alpha  (-(K_{P})_{z^i,{\overline
 z}^j})h^{-1}\alpha^*
 \end{array}\tag{2.13}\label{2.13}\end{align*}
 and
 \begin{align*}\begin{array}{llll}
 ({\mathscr K}_{i,j}(P))F_2&=&  (\alpha (-(K_{P})_{z^i,{\overline
 z}^j})h^{-1}\alpha^*) \alpha \partial h^{-1}\alpha^*\\
 &=& \alpha  (-(K_{P})_{z^i,{\overline
 z}^j})\partial h^{-1}\alpha^*\\
 &=& \alpha  (-(K_{P})_{z^i,{\overline
 z}^j})(-\theta_{P}) h^{-1}\alpha^*
 \end{array}\tag{2.14}\label{2.14}\end{align*}
and
\begin{align*}\begin{array}{llll}
 ({\mathscr K}_{i,j}(P))F_1&=&  (\alpha (-(K_{P})_{z^i,{\overline
 z}^j})h^{-1}\alpha^*) \partial \alpha h^{-1}\alpha^*\\
 &=& \alpha  (-(K_{P})_{z^i,{\overline
 z}^j})h^{-1}\partial h h^{-1}\alpha^*\\
  &=& \alpha  (-(K_{P})_{z^i,{\overline
 z}^j})\theta_{P} h^{-1}\alpha^*
 \end{array}\tag{2.15}\label{2.15}\end{align*}
 and
  \begin{align*}\begin{array}{llll}
 F_2({\mathscr K}_{i,j}(P))&=&  \alpha \partial h^{-1}\alpha^*(\alpha (-(K_{P})_{z^i,{\overline
 z}^j})h^{-1}\alpha^*)\\
 &=& \alpha \partial h^{-1}h (-(K_{P})_{z^i,{\overline
 z}^j})h^{-1}\alpha^*\\
 &=& \alpha(-\theta_{P})(-(K_{P})_{z^i,{\overline
 z}^j})h^{-1}\alpha^*.\\
 \end{array}\tag{2.16}\label{2.16}\end{align*}

By formulaes \ref{2.12}, \ref{2.13} and \ref{2.14}, we have
\begin{align*}\partial({\mathscr K}_{i,j}(P))=\alpha (-\partial (K_{P})_{z^i,{\overline
 z}^j})h^{-1}\alpha^*+ F_1(F_{i,j}(P))+ ({\mathscr K}_{i,j}(P))F_2.\tag{2.17}\label{2.17}\end{align*}

By formulaes \ref{2.15} and \ref{2.16}, we have
\begin{align*} \alpha[\theta_{P}, -(K_{P})_{z^i,\overline{z}^j}]h^{-1}\alpha^*=-F_2({\mathscr K}_{i,j}(P))-({\mathscr K}_{i,j}(P))F_1. \tag{2.18}\label{2.18}\end{align*}
Thus, it follows that
$$\begin{array}{llll}\alpha
(-(K_P)_{z^{i+1},\overline{z}^{j}})h^{-1}\alpha^{*}&=&\alpha((-\partial
(K_P)_{z^{i},\overline{z}^{j}})+[\theta,
-(K_P)_{z^{i},\overline{z}^{j}}])h^{-1}\alpha^*\\
&=&\partial({\mathscr K}_{i,j}(P))- F_1({\mathscr K}_{i,j}(P))-
({\mathscr K}_{i,j}(P))F_2-F_2({\mathscr K}_{i,j}(P))-({\mathscr K}_{i,j}(P))F_1\\
&=&\partial({\mathscr K}_{i,j}(P))-\partial P({\mathscr
K}_{i,j}(P))-({\mathscr K}_{i,j}(P))\partial P.
\end{array}
$$

In order to satisfy the conclusion of Lemma,  we need ${\mathscr
K}_{i,j}(P)$ have the following induction formulae :
\begin{align*} {\mathscr K}_{i+1,j}(P)=\partial({\mathscr K}_{i,j}(P))-\partial P({\mathscr K}_{i,j}(P))-({\mathscr K}_{i,j}(P))\partial
P, i,j=0,1,\cdots.\tag{2.19}\label{2.19}\end{align*}

On the other hand, we have
 \begin{align*}\begin{array}{lll}
\overline{ \partial}({\mathscr K}_{i,j}(P))&=&\overline{\partial}
(\alpha (-(K_{P})_{z^i,{\overline
 z}^j})h^{-1}\alpha^*)\\
 &=& \alpha (-\overline{\partial}(K_{P})_{z^i,{\overline
 z}^j})h^{-1}\alpha^*+\alpha ( -(K_{P})_{z^i,{\overline
 z}^j})\overline{\partial}h^{-1}\alpha^*+\alpha (-(K_{P})_{z^i,{\overline
 z}^j})h^{-1}\overline{\partial}\alpha^*\\
  &=& \alpha (-(K_{P})_{z^i,{\overline
 z}^{j+1}})h^{-1}\alpha^*+\alpha ( -(K_{P})_{z^i,{\overline
 z}^j})\overline{\partial}h^{-1}\alpha^*+\alpha (-(K_{P})_{z^i,{\overline
 z}^j})h^{-1}\overline{\partial}\alpha^*\\
 \end{array}\tag{2.20}\label{2.20}\end{align*}

 Note that
 \begin{align*}\begin{array}{llll}
G_1({\mathscr K}_{i,j}(P))&=& \alpha
\overline{\partial}h^{-1}\alpha^*  (\alpha (-(K_{P})_{z^i,{\overline
 z}^j})h^{-1}\alpha^*)\\
 &=& \alpha \overline{\partial}h^{-1} h (-(K_{P})_{z^i,{\overline
 z}^j})h^{-1}\alpha^*
 \end{array}\tag{2.21}\label{2.21}\end{align*}
 and
 \begin{align*}\begin{array}{llll}
 ({\mathscr K}_{i,j}(P))G_2&=&  (\alpha (-(K_{P})_{z^i,{\overline
 z}^j})h^{-1}\alpha^*) \alpha h^{-1} \overline{\partial}\alpha^*\\
 &=& \alpha  (-(K_{P})_{z^i,{\overline
 z}^j})h^{-1}\alpha^*\alpha h^{-1} \overline{\partial}\alpha^*\\
 &=& \alpha  (-(K_{P})_{z^i,{\overline
 z}^j}) h^{-1}\overline{\partial}\alpha^*\\
 \end{array}\tag{2.22}\label{2.22}\end{align*}
and
 \begin{align*}\begin{array}{llll}
 ({\mathscr K}_{i,j}(P))G_1&=&  (\alpha (-(K_{P})_{z^i,{\overline
 z}^j})h^{-1}\alpha^*)  \alpha \overline{\partial}h^{-1}\alpha^* \\
&=&  \alpha (-(K_{P})_{z^i,{\overline
 z}^j})h^{-1}h \overline{\partial}h^{-1}\alpha^* \\
  &=& \alpha  (-(K_{P})_{z^i,{\overline
 z}^j})\overline{\partial}h^{-1}\alpha^*
 \end{array}\tag{2.23}\label{2.23}\end{align*}
 and
  \begin{align*}\begin{array}{llll}
G_2({\mathscr K}_{i,j}(P))&=&  \alpha h^{-1}
\overline{\partial}\alpha^*(\alpha (-(K_{P})_{z^i,{\overline
 z}^j})h^{-1}\alpha^*)\\
 &=& \alpha h^{-1} \overline{\partial}h
(-(K_{P})_{z^i,{\overline
 z}^j})h^{-1}\alpha^*\\
 \end{array}\tag{2.24}\label{2.24}\end{align*}

By formulaes \ref{2.21} and \ref{2.22}, we have
$$\begin{array}{lll}
\overline{ \partial}({\mathscr K}_{i,j}(P))&=&  \alpha
(-(K_{P})_{z^i,{\overline
 z}^{j+1}})h^{-1}\alpha^*+({\mathscr K}_{i,j}(P))G_1+({\mathscr K}_{i,j}(P))G_2\\
\end{array}$$

By formulaes \ref{2.23} and \ref{2.24}, we have that
$$G_1({\mathscr K}_{i,j}(P))+G_2({\mathscr K}_{i,j}(P))=0.$$

Then we have
$$\begin{array}{lll}
\overline{ \partial}({\mathscr K}_{i,j}(P))&=&  \alpha
(-(K_{P})_{z^i,{\overline
 z}^{j+1}})h^{-1}\alpha^*+({\mathscr K}_{i,j}(P))G_1+({\mathscr K}_{i,j}(P))G_2\\
&=&  \alpha (-(K_{P})_{z^i,{\overline
 z}^{j+1}})h^{-1}\alpha^*+({\mathscr K}_{i,j}(P))G_1+({\mathscr K}_{i,j}(P))G_2+G_1({\mathscr K}_{i,j}(P))+G_2({\mathscr K}_{i,j}(P))\\
&=&  \alpha (-(K_{P})_{z^i,{\overline
 z}^{j+1}})h^{-1}\alpha^*+\overline{\partial}P({\mathscr K}_{i,j}(P))+({\mathscr K}_{i,j}(P))\overline{\partial}P\\
\end{array}$$

Then we also need  the following induction formulae :
\begin{align*} {\mathscr K}_{i,j+1}(P)=\overline{\partial}({\mathscr
K}_{i,j}(P))-\overline{\partial} P({\mathscr K}_{i,j}(P))-({\mathscr
K}_{i,j}(P))\overline{\partial} P, i,j=0,1,\cdots.
\tag{2.25}\label{2.25}
\end{align*}

To finish the proof, we only need to prove the following induction
formulaes \ref{2.19} and \ref{2.25} i.e.
$${\mathscr K}_{i+1,j}(P)=\partial({\mathscr K}_{i,j}(P))-\partial
P({\mathscr K}_{i,j}(P))-({\mathscr K}_{i,j}(P))\partial P,$$ $$
{\mathscr K}_{i,j+1}(P)=\overline{\partial}({\mathscr
K}_{i,j}(P))-\overline{\partial} P({\mathscr K}_{i,j}(P))-({\mathscr
K}_{i,j}(P))\overline{\partial} P, i,j=0,1,\cdots,$$

In this case, ${\mathscr K}_{i,j}(P)$, $i,j=0,1,\cdots$ will satisfy
the conclusion
$${\mathscr
K}_{i,j}(P)(\lambda)=\alpha(\lambda)(-K_{P,z^i,{\overline
 z}^j})h_1^{-1}\alpha^*(\lambda), \forall \lambda\in \Omega. $$

In order to prove this, we need the following  observations.

{\bf Claim 3}\,\,  Each ${\mathscr K}_{i,j}(P)$ for arbitrary $i,j$,
may be expressed by as a sum of monomials of the form
$$ (\overline{\partial }^{i_1}P\partial^{j_1}P)^{l_1}(\overline{\partial }^{i_2}P\partial^{j_2}P)^{l_2}\cdots(\overline{\partial
}^{i_t}P\partial^{j_t}P)^{l_t}.$$

By Claim 2, we already know that Claim 4 holds for the case of
$i,j\leq 2$. By the induction proof, we assume that the conclusion
holds for the case of $i,j\leq k$. Then we only need to prove the
conclusion also holds for the case of $i,j\leq k+1$.

With loss of generality, when $i=k+1$ or $j=k+1$,  we assume that
$${\mathscr K}_{i,j}(P)= (\overline{\partial }^{i_1}P\partial^{j_1}P)^{l_1}(\overline{\partial }^{i_2}P\partial^{j_2}P)^{l_2}\cdots(\overline{\partial
}^{i_t}P\partial^{j_t}P)^{l_t}.$$

Since ${\mathscr K}_{i,j+1}(P)$ is defined as
$\overline{\partial}({\mathscr K}_{i,j}(P))P.$ So we only need to
prove the conclusion will hold for $\overline{\partial}({\mathscr
K}_{i,j}(P))P.$ For the sake of simplicity of expression, we will
assume that
$${\mathscr K}_{i,j}(P)=(\overline{\partial }^{i_1}P\partial^{j_1}P)^{l_1}.$$
Then we have
 \begin{align*}\begin{array}{llll}
\overline{\partial}({\mathscr
K}_{i,j}(P))P&=&(\overline{\partial}^{i_1+1}P\partial^{j_1}P+\overline{\partial}^{i_1}P\overline{\partial}\partial^{j_1}P)
)(\overline{\partial
}^{i_1}P\partial^{j_1}P)^{l_1-1}+\cdots\\
&+&(\overline{\partial
}^{i_1}P\partial^{j_1}P)^{r-1}(\overline{\partial}^{i_1+1}P\partial^{j_1}P+\overline{\partial}^{i_1}P\overline{\partial}\partial^{j_1}P)
)(\overline{\partial
}^{i_1}P\partial^{j_1}P)^{l_1-r}+\cdots\\
&&+(\overline{\partial
}^{i_1}P\partial^{j_1}P)^{l_1-1}(\overline{\partial}^{i_1+1}P\partial^{j_1}P+\overline{\partial}^{i_1}P\overline{\partial}\partial^{j_1}P
).
\end{array}\tag{2.26}\label{2.26}
 \end{align*}

By \ref{1.2}, we have
$$\overline{\partial}\partial^{j_1}P=\partial^{j_1}P\overline{\partial}P-\overline{\partial}P\partial^{j_1}P-\sum\limits^{j_1-1}_{k=1}C^k_{j_1}(\partial^{j_1-k}P\overline{\partial}P\partial^kP)$$
And if $1<r<l$, then we have

$$\begin{array}{llll}
&&(\overline{\partial
}^{i_1}P\partial^{j_1}P)^{r-1}\partial(\overline{\partial}^{i_1}\partial^{j_1}P)(\overline{\partial
}^{i_1}P\partial^{j_1}P)^{l-r}\\&=& (\overline{\partial
}^{i_1}P\partial^{j_1}P)^{r-1}(\overline{\partial}^{i_1+1}P\partial^{j_1}P+\overline{\partial}^{i_1}P\overline{\partial}\partial^{j_1}P)
)(\overline{\partial }^{i_1}P\partial^{j_1}P)^{l-r}\\
&=& (\overline{\partial
}^{i_1}P\partial^{j_1}P)^{r-1}(\overline{\partial}^{i_1+1}P\partial^{j_1}P)(\overline{\partial
}^{i_1}P\partial^{j_1}P)^{l-r}\\
&&+(\overline{\partial
}^{i_1}P\partial^{j_1}P)^{r-1})(\overline{\partial}^{i_1}P\overline{\partial}\partial^{j_1}P)
(\overline{\partial
}^{i_1}P\partial^{j_1}P)^{l-r} \\
&=& (\overline{\partial
}^{i_1}P\partial^{j_1}P)^{r-1}(\overline{\partial}^{i_1+1}P\partial^{j_1}P)(\overline{\partial
}^{i_1}P\partial^{j_1}P)^{l-r}\\
&&+(\overline{\partial
}^{i_1}P\partial^{j_1}P)^{r-1}(\overline{\partial}^{i_1}P(\partial^{j_1}P\overline{\partial}P-\overline{\partial}P\partial^{j_1}P-\sum\limits^{j_1-1}_{k=1}C^k_{j_1}(\partial^{j_1-k}P\overline{\partial}P\partial^kP)))
(\overline{\partial
}^{i_1}P\partial^{j_1}P)^{l-r} \\

\end{array}$$

Since $\overline{\partial}^{i_1}P=P\overline{\partial}^{i_1}P$ and
$\overline{\partial}PP=0$, we have
$$\overline{\partial}^{i_1}P\partial^{j_1}P(\overline{\partial}^{i_1}P\partial^{j_1}P\overline{\partial}P)
(\overline{\partial}^{i_1}P\partial^{j_1}P)^{l-r}=\overline{\partial}^{i_1}P\partial^{j_1}P(\overline{\partial}^{i_1}P\partial^{j_1}P\overline{\partial}P\overline{\partial}P)
(P\overline{\partial}^{i_1}P\partial^{j_1}P)^{l-r}=0.$$ Similarly,
by the fact $\overline{\partial}^{i_1}\overline{\partial}P=0$, it
follows that
$$\overline{\partial}^{i_1}P\partial^{j_1}P(\overline{\partial}^{i_1}P\overline{\partial}P\partial^{j_1}P)
(\overline{\partial}^{i_1}P\partial^{j_1}P)^{l-r}=0.$$
 That means
  \begin{align*}\begin{array}{llll}
&&(\overline{\partial
}^{i_1}P\partial^{j_1}P)^{r-1}\partial(\overline{\partial}^{i_1}\partial^{j_1}P)(\overline{\partial
}^{i_1}P\partial^{j_1}P)^{l-r}\\&=& (\overline{\partial
}^{i_1}P\partial^{j_1}P)^{r-1}(\overline{\partial}^{i_1+1}P\partial^{j_1}P)(\overline{\partial
}^{i_1}P\partial^{j_1}P)^{l-r}\\
&&+(\overline{\partial
}^{i_1}P\partial^{j_1}P)^{r-1}(\overline{\partial}^{i_1}P(\partial^{j_1}P\overline{\partial}P-\overline{\partial}P\partial^{j_1}P-\sum\limits^{j_1-1}_{k=1}C^k_{j_1}(\partial^{j_1-k}P\overline{\partial}P\partial^kP)))
(\overline{\partial
}^{i_1}P\partial^{j_1}P)^{l-r} \\
&=& (\overline{\partial
}^{i_1}P\partial^{j_1}P)^{r-1}(\overline{\partial}^{i_1+1}P\partial^{j_1}P)(\overline{\partial
}^{i_1}P\partial^{j_1}P)^{l-r}\\
&&-\sum\limits^{j_1-1}_{k=1}C^k_{j_1}(\overline{\partial}^{i_1}P\partial^{j_1}P)^{r-1}(\overline{\partial}^{i_1}P\partial^{j_1-k}P\overline{\partial}P\partial^kP)
(\overline{\partial
}^{i_1}P\partial^{j_1}P)^{l-r} \\
\end{array}\tag{2.27}\label{2.27} \end{align*}

By the formulaes \ref{2.26} and \ref{2.27}, we can see that Claim 3
also holds for ${\mathscr K}_{i,j+1}(P)$.

 By a similar proof, we also can see that Claim 3 holds for
${\mathscr K}_{i+1,j}(P)$. Thus, we finish the proof of Claim 3.

Now we can prove the formulaes \ref{2.19} and \ref{2.25} as the
ending of the proof of the lemma.

By the Claim 3 and formulaes \ref{1.3} and \ref{1.4}, we will
obtain
$$\begin{array}{llll}{\mathscr K}_{i,j}(P)&=&(\overline{\partial }^{i_1}P\partial^{j_1}P)^{l_1}(\overline{\partial }^{i_2}P\partial^{j_2}P)^{l_2}\cdots(\overline{\partial
}^{i_t}P\partial^{j_t}P)^{l_t}\\
&=&(\overline{\partial
}^{i_1}P\partial^{j_1}P)^{l_1}(\overline{\partial
}^{i_2}P\partial^{j_2}P)^{l_2}\cdots(\overline{\partial
}^{i_t}P\partial^{j_t}P)^{l_t}P\\
&=&{\mathscr K}_{i,j}(P)P.\end{array}$$

By \ref{1.4}, it follows that $$\overline{\partial}P{\mathscr
K}_{i,j}(P)=0.$$
 Then we have
$$\begin{array}{llll}\overline{\partial}({\mathscr K}_{i,j}(P))-({\mathscr
K}_{i,j}(P))\overline{\partial}P-\overline{\partial}P({\mathscr
K}_{i,j}(P))&=&\overline{\partial}({\mathscr
K}_{i,j}(P)P)-({\mathscr
K}_{i,j}(P))\overline{\partial}P\\
&=&\overline{\partial}({\mathscr
K}_{i,j}(P))P\\
&=&{\mathscr K}_{i,j+1}(P)\\
\end{array}
$$
Thus, the formulae \ref{2.19} holds. Similarly, we also can prove
\ref{2.25}.

\end{proof}

\begin{rem} From the proof of Lemma \ref{main}, we can
 see that the curvature formulae ${\mathscr K}_{i,j}(P)$ (See Definition 2.1 ) does not depend on the
 chose of $P$.
 \end{rem}

\begin{lem}\label{tran} Let $P,Q\in {\mathcal P}_n(\Omega, {\mathcal
 U})\cap {\mathcal A}_n(\Omega,{\mathcal U})$. And there exist  $\alpha ,\beta: \Omega\rightarrow 
{\mathscr L}(\mathbb{C}^n, l^2(\mathbb{N},B))$  such that
$$P(\lambda)=\alpha(\lambda)\cdot(\alpha^{*}(\lambda)\cdot\alpha(\lambda))^{-1}\cdot\alpha^{*}(\lambda),
Q(\lambda)=\beta(\lambda)\cdot(\beta^{*}(\lambda)\cdot\beta(\lambda))^{-1}\cdot\beta^{*}(\lambda),
\forall \lambda \in \Omega.$$ Let ${\mathscr K}_{i,j}(P), {\mathscr
K}_{i,j}(Q)$ $ i,j=0,1,\cdots,n$ be differential $\mathcal U$-valued
functions in ${\mathcal
 U}$ constructed in Lemma \ref{main} according to $P$ and $Q$ respectively.

 Let $k\geq 1$ be an integer.  Then  there exists a unitary $v$
such that $$v\overline{\partial}^iP(\lambda)\partial^iP(\lambda)v^*=
\overline{\partial}^jQ(\lambda)\partial^iQ(\lambda),\forall i,j\leq
k$$ if and only if for any $\lambda\in \Omega$,
$$v{\mathscr K}_{i,j}(P)(\lambda)v^*=
{\mathscr K}_{i,j}(Q)(\lambda),\forall i,j\leq k.$$
 \end{lem}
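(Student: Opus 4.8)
The plan is to exploit the single structural fact underlying both implications: conjugation by a fixed unitary, $x\mapsto vxv^{*}$, is a unital $*$-automorphism of $\mathcal U$, hence linear, multiplicative and $*$-preserving, so it commutes with every universal noncommutative $*$-polynomial expression. By Remark 2.7 together with Claim 4 of Lemma 2.6, each $F_{i,j}(P)$ is such a universal expression: a fixed linear combination of monomials $(\overline{\partial}^{i_1}P\partial^{j_1}P)^{l_1}\cdots(\overline{\partial}^{i_t}P\partial^{j_t}P)^{l_t}$ whose coefficients and index pattern do not depend on the curve, so that the very same formula computes $F_{i,j}(Q)$ from the blocks $\overline{\partial}^{I}Q\partial^{J}Q$. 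Throughout I write $B_{I,J}(P)=\overline{\partial}^{I}P\partial^{J}P$ for the order-of-contact blocks, and record the index bookkeeping from the recursion (2.6.26): every block occurring in $F_{i,j}(P)$ has $\overline{\partial}$-order $\le j+1$ and $\partial$-order $\le i+1$, its leading (top-order) block being $B_{j+1,\,i+1}(P)$ with coefficient $+1$.

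For the forward implication I would assume $vB_{I,J}(P)v^{*}=B_{I,J}(Q)$ in the relevant range and apply $v(\cdot)v^{*}$ to the universal formula for $F_{i,j}(P)$. Multiplicativity lets me insert $v^{*}v=1$ between all factors, turning $v\bigl(\prod_{s}(\overline{\partial}^{i_s}P\partial^{j_s}P)^{l_s}\bigr)v^{*}$ into $\prod_{s}(v\,\overline{\partial}^{i_s}P\partial^{j_s}P\,v^{*})^{l_s}=\prod_{s}(\overline{\partial}^{i_s}Q\partial^{j_s}Q)^{l_s}$, and summing over the monomials yields $vF_{i,j}(P)v^{*}=F_{i,j}(Q)$. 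The only point needing care is that all blocks appearing for $i,j\le k$ fall under the hypothesis, which is exactly the order bound recorded above; the instance $I=J=0$, i.e. $vPv^{*}=Q$, is the base of the order-of-contact data and anchors everything.

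The backward implication is the main obstacle, since it requires inverting the passage from blocks to curvature derivatives: I must exhibit, for each $(I,J)$, a universal $*$-polynomial $G_{I,J}$ with $B_{I,J}(P)=G_{I,J}(\{F_{i,j}(P)\})$, again independent of the curve. I would prove this by induction, reading the base cases off Claim 2 — $B_{1,1}=F_{0,0}$, $B_{1,2}=F_{1,0}$, $B_{2,1}=F_{0,1}$, $B_{2,2}=F_{1,1}+2F_{0,0}^{2}$ — together with the degenerate boundary blocks, which by (1.1.4) satisfy $\overline{\partial}^{I}P\,\partial^{0}P=0$ and $\overline{\partial}^{0}P\,\partial^{J}P=0$ for $I,J\ge 1$, leaving only $B_{0,0}=P$. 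The inductive step uses the recursions (2.6.17) and (2.6.23): since the leading block of $F_{i,j}(P)$ is $B_{j+1,\,i+1}(P)$ with unit coefficient and all correction terms (such as the $2F_{0,0}^{2}$ above) are products of strictly lower blocks already expressed in the $F$'s, the change of variables between $\{B_{I,J}(P)\}$ and $\{F_{i,j}(P)\}$ is triangular with identity leading part and can be inverted recursively without circularity. The delicate bookkeeping is the off-by-one in orders (the $F$'s up to index $k$ correspond to blocks up to order $k+1$, and conversely blocks up to order $k$ are recovered from the $F$'s up to index $k-1$) and the recovery of the base projection $P=B_{0,0}$; since $P\in\mathcal A_{n}(\Omega,\mathcal U)$, this block is singled out as part of the order-of-contact data (the $I=J=0$ case), and I would carry the relation $vPv^{*}=Q$ as the base of the induction so that it is available when reconstituting each block. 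Granting the universal inverse $G_{I,J}$, the conclusion is immediate: applying $v(\cdot)v^{*}$ to $B_{I,J}(P)=G_{I,J}(\{F_{i,j}(P)\})$ and using $vF_{i,j}(P)v^{*}=F_{i,j}(Q)$ gives $vB_{I,J}(P)v^{*}=G_{I,J}(\{F_{i,j}(Q)\})=B_{I,J}(Q)$, which is precisely the order-of-contact identity.
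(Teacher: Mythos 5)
Your proposal is correct and follows essentially the same route as the paper: your forward direction (inserting $v^{*}v$ between the factors of the universal monomials) is exactly the paper's Claim 3, and your backward direction---inverting the triangular correspondence between the blocks $\overline{\partial}^{I}P\partial^{J}P$ and the $F_{i,j}(P)$ by using that the leading block occurs exactly once with unit coefficient while all corrections involve strictly lower-order blocks---is precisely the content of the paper's Claims 1 and 2. The one wrinkle you flag, recovering the $(0,0)$ block relation $vPv^{*}=Q$ from the $F$-identities (your ``carry it as the base of the induction'' device is, strictly speaking, assuming part of the conclusion), is a real subtlety, but the paper's own proof leaves it equally unaddressed: its Claim 2 likewise only produces the blocks with both indices at least one.
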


\begin{proof} By Claim 4 of Lemma \ref{main}, each ${\mathscr K}_{i,j}(P)$  may be expressed by as a sum
of monomials of the form
$$ (\overline{\partial }^{i_1}P\partial^{j_1}P)^{l_1}(\overline{\partial }^{i_2}P\partial^{j_2}P)^{l_2}\cdots(\overline{\partial
}^{i_t}P\partial^{j_t}P)^{l_t}.$$

Firstly, we have the following claim:

{\bf Claim 1}\,\,In the expression formulae of ${\mathscr
K}_{i,j}(P)$,
 $\overline{\partial}^iP\partial^j P$ appears only once.

 In fact, when $0\leq i,j\leq 1$, we have
 $${\mathscr K}_{0,0}(P)=\overline{\partial}P\partial P,
 {\mathscr K}_{1,0}(P)=\overline{\partial}^2P\partial P,
 {\mathscr K}_{0,1}(P)=\overline{\partial}P\partial^2 P, {\mathscr K}_{1,1}(P)=\overline{\partial}^2P\partial^2
 P-2(\overline{\partial}P\partial P)^2.$$
If we  assume that the Claim 1 holds for $i,j\leq k$, $k\geq 1,$
then we only need to prove that Claim 1 will also hold for
$i,j=k+1.$

Since $$\begin{array}{lllll} {\mathscr
K}_{i+1,j}(P)&=&\partial({\mathscr K}_{i,j}(P))-\partial
P({\mathscr K}_{i,j}(P))-({\mathscr K}_{i,j}(P))\partial P\\
&=&\partial(\overline{\partial}^i\partial^j P+{\mathscr
K}_{i,j}(P)-\overline{\partial}^i\partial^j P)-\partial
P({\mathscr K}_{i,j}(P))-({\mathscr K}_{i,j}(P))\partial P\\
&=&\overline{\partial}^{i+1}P\partial^j(P)+\overline{\partial}^iP\overline{\partial}\partial^{j+1}P+\partial({\mathscr
K}_{i,j}(P)-\overline{\partial}^i\partial^j P)-\partial
P({\mathscr K}_{i,j}(P))-({\mathscr K}_{i,j}(P))\partial P\\
\end{array}
$$

If $\overline{\partial}^kP\partial^l P$ appears in the expression
formulae of $\partial({\mathscr
K}_{i,j}(P)-\overline{\partial}^i\partial^j P)-\partial P({\mathscr
K}_{i,j}(P))-({\mathscr K}_{i,j}(P))\partial P$, then $k<i$. So we
can see that $\overline{\partial}^{i+1}P\partial^j(P)$ appears only
once in the expression formulae of ${\mathscr K}_{i+1,j}(P)$. Then
we finish the proof of Claim 1.

{\bf Claim 2}\,\, Let $v$ be a unitary of $\mathcal U$. Then
$$v{\mathscr K}_{i,j}(P)v^*={\mathscr K}_{i,j}(Q), \forall i,j \leq k\Rightarrow
v\overline{\partial}^iP\partial^jPv^*=\overline{\partial}^iQ\partial^jQ,
\forall i,j\leq k.$$

In fact, when $0\leq i,j\leq 1,$ recall that
$${\mathscr K}_{0,0}(P)=\overline{\partial}P\partial P,
 {\mathscr K}_{1,0}(P)=\overline{\partial}^2P\partial P,
 {\mathscr K}_{0,1}(P)=\overline{\partial}P\partial^2 P, {\mathscr K}_{1,1}(P)=\overline{\partial}^2P\partial^2
 P-2(\overline{\partial}P\partial P)^2.$$

 If there exists unitary $v$ such that
 $$v{\mathscr K}_{i,j}(P)v^*={\mathscr K}_{i,j}(P), i,j\leq 2,$$
 then $$v(\overline{\partial}P\partial
 P)v^*=v({\mathscr K}_{0,0}(P))v^*={\mathscr K}_{0,0}(Q)=\overline{\partial}Q\partial
 Q,$$
$$ v(\overline{\partial}^2 P\partial P)v^*=v({\mathscr K}_{2,1}(P))v^*={\mathscr K}_{1,0}(Q)=\overline{\partial}^2 Q\partial Q,$$
$$v(\overline{\partial} P\partial^2 P)v^*=v({\mathscr K}_{1,2}(P))v^*={\mathscr K}_{0,1}(Q)=\overline{\partial} Q\partial^2 Q, $$
and
$$\begin{array}{llll}
v({\mathscr
K}_{1,1}(P))v^*&=&v(\overline{\partial}^2P\partial^2P)v^*-2v(\overline{\partial}P\partial
P)^2v^*\\
&=&v(\overline{\partial}^2P\partial^2P)v^*-2v(\overline{\partial}P\partial
P)v^*v(\overline{\partial}P\partial
P)v^*\\
&=&\overline{\partial}^2Q\partial^2Q-2(\overline{\partial}Q\partial
Q)^2.
\end{array}
$$
It follows that $$
v(\overline{\partial}^2P\partial^2P)v^*=\overline{\partial}^2Q\partial^2Q.$$
If we assume the Claim 2 holds for the case of ``$k\leq l$'', then
we only need to prove the conclusion holds for the case of $k=l+1.$

Note that ${\mathscr K}_{i,j}(P)$ may be expressed by  as a sum of
monomials of the form
$$ (\overline{\partial }^{i_1}P\partial^{j_1}P)^{l_1}(\overline{\partial }^{i_2}P\partial^{j_2}P)^{l_2}\cdots(\overline{\partial
}^{i_t}P\partial^{j_t}P)^{l_t}.$$ And
$\overline{\partial}^iP\partial^jP$ appears only once in the
expression formulae of ${\mathscr K}_{i,j}(P)$. Let ${\mathscr
K}_{i,j}(P)\sim {\mathscr K}_{i,j}(Q)$ i.e. there exists unitary $v$
such that
$$v{\mathscr K}_{i,j}(P)v^*={\mathscr K}_{i,j}(Q), i,j \leq l.$$

By induction proof, we have that
 \begin{align*}v\overline{\partial}^iP\partial^j
Pv^*=\overline{\partial}^iQ\partial^j Q, i,j\leq
l.\tag{2.29}\label{2.29} \end{align*}

And if $$v{\mathscr K}_{l+1,l}(P)v^*={\mathscr K}_{l+1,l}(Q),$$ then
we have
$$\begin{array}{llll}
v({\mathscr
K}_{l+1,l}(P))v^*&=&v(\overline{\partial}^{i+1}P\partial^j
P)v^*+v({\mathscr K}_{l+1,l}(P)-\overline{\partial}^{i+1}P\partial^j P)v^*\\
&=&{\mathscr K}_{l+1,l}(Q)\\
&=&\overline{\partial}^{i+1}Q\partial^j Q+{\mathscr
K}_{l+1,l}(Q)-\overline{\partial}^{i+1}Q\partial^j Q
\end{array}
$$
 Since ${\mathscr K}_{l+1,l}(P)-\overline{\partial}^{l+1}P\partial^l P$ may be expressed by  as a sum of monomials of
the form
$$ (\overline{\partial }^{\widetilde{i}_1}P\partial^{\widetilde{j}_1}P)^{l_1}(\overline{\partial }^{\widetilde{i}_2}P\partial^{\widetilde{j}_2}P)^{l_2}\cdots(\overline{\partial
}^{\widetilde{i}_{\widetilde{t}}}P\partial^{\widetilde{j}_{\widetilde{t}}}P)^{l_{\widetilde{t}}}.$$
 Since $\overline{\partial}^{l+1}P\partial^l P$ appears only once in the expression formulae of ${\mathscr K}_{l+1,l}(P)$,
 we have $$\widetilde{i}_{r}, \widetilde{j}_{r}\leq l,
r\leq\widetilde{t}.$$  By formulae \ref{2.29}, we have
$$v({\mathscr K}_{l+1,l}(P)-\overline{\partial}^{l+1}P\partial^l P)v^*={\mathscr K}_{l+1,l}(Q)-\overline{\partial}^{l+1}Q\partial^l
Q.$$

So we have $$v(\overline{\partial}^{l+1}P\partial^l
P)v^*=\overline{\partial}^{l+1}Q\partial^l Q.$$

Similarly, we can prove that
$$v(\overline{\partial}^{l}P\partial^{l+1}
P)v^*=\overline{\partial}^{l}Q\partial^{l+1} Q,$$ and
$$v(\overline{\partial}^{l+1}P\partial^{l+1}
P)v^*=\overline{\partial}^{l+1}Q\partial^{l+1} Q.$$ Then we finish
the proof of Claim 2.

{\bf Claim 3}\,\, Let $v$ be a unitary of $\mathcal U$. Then
$$v{\mathscr K}_{i,j}(P)v^*={\mathscr K}_{i,j}(Q), \forall i,j \leq k\Leftarrow
v\overline{\partial}^iP\partial^jPv^*=\overline{\partial}^iQ\partial^jQ,
\forall i,j\leq k.$$

Suppose that ${\mathscr K}_{i,j}(P)$ is expressed by  as a sum of
monomials of the form
$$ (\overline{\partial }^{i_1}P\partial^{j_1}P)^{l_1}(\overline{\partial }^{i_2}P\partial^{j_2}P)^{l_2}\cdots(\overline{\partial
}^{i_t}P\partial^{j_t}P)^{l_t},$$ and $i_r, j_r \leq k, r\leq t.$

Then we have
$$\overline{\partial}^{i_r}Q\partial^{j_r}Q=v\overline{\partial}^{i_r}P\partial^{j_r}Pv^*,
r\leq t,$$ and
$$ v(\overline{\partial }^{i_1}P\partial^{j_1}P)^{l_1}(\overline{\partial }^{i_2}P\partial^{j_2}P)^{l_2}\cdots(\overline{\partial
}^{i_t}P\partial^{j_t}P)^{l_t}v^*=(\overline{\partial
}^{i_1}Q\partial^{j_1}Q)^{l_1}(\overline{\partial
}^{i_2}Q\partial^{j_2}Q)^{l_2}\cdots(\overline{\partial
}^{i_t}Q\partial^{j_t}Q)^{l_t}.$$

Then we finish the proof of Lemma \ref{tran}.

\end{proof}

\begin{thm}\label{mainthm1}
Let $P,Q\in {\mathcal A}_n(\Omega,{\mathcal U})$ be two 
holomorphic curves. Then $P\sim_{u} Q$ if and only if ${\mathscr
K}_{i,j}(P)(\lambda)\sim_{u} {\mathscr K}_{i,j}(Q)(\lambda), \forall
\lambda\in \Omega, \,\,\mbox{and}\,\, i,j=0,1,\cdots,n-1.$
\end{thm}

\begin{proof}

By Theorem \ref{MSlemma}, we have  $P\sim_{u}Q$ if and only if for
each $\lambda\in \Omega$, there exists a unitary $v_{\lambda}\in
{\mathcal U}$ such that
$$v_{\lambda}\overline{\partial}^iP(\lambda)\partial^jP(\lambda)v_{\lambda}^*=
\overline{\partial}^iQ(\lambda)\partial^jQ(\lambda),\forall i,j\leq
k.$$ By lemma \ref{tran}, we have
$$v_{\lambda}\overline{\partial}^iP(\lambda)\partial^iP(\lambda)v_{\lambda}^*=
\overline{\partial}^jQ(\lambda)\partial^iQ(\lambda),\forall i,j\leq
k$$ if and only if for any $\lambda\in \Omega$,
$$v_{\lambda}{\mathscr K}_{i,j}(P)(\lambda)v_{\lambda}^*=
{\mathscr K}_{i,j}(Q)(\lambda),\forall i,j\leq k.$$  By Lemma
\ref{MSlemma}, we finish the proof of Theorem \ref{mainthm1}.
\end{proof}

Let $P,Q\in {\mathcal P}_n(\Omega, {\mathcal
 U})\cap {\mathcal A}_n(\Omega,{\mathcal U})$.
 And there exist holomorphic maps $\alpha, \beta: \Omega\rightarrow 
{\mathscr L}(\mathbb{C}^n, l^2(\mathbb{N},B))$ such that
$$P(\lambda)=\alpha(\lambda)\cdot(\alpha^{*}(\lambda)\cdot\alpha(\lambda))^{-1}\cdot\alpha^{*}(\lambda),
Q(\lambda)=\beta(\lambda)\cdot(\beta^{*}(\lambda)\cdot\beta(\lambda))^{-1}\cdot\beta^{*}(\lambda),
\forall \lambda \in \Omega.$$ {\bf Let  ${\mathscr K}_{i,j}(P),
{\mathscr K}_{i,j}(Q): \Omega\rightarrow {\mathcal U}$, $
i,j=0,1,\cdots,n$ be the curvatures and covariant derivatives of $P$
and $Q$ respectively. By Lemma \ref{main}, we have that
$${\mathscr
K}_{i,j}(P)=\alpha(-K_{P,z^i,{\overline
 z}^j})h_1^{-1}\alpha^*, {\mathscr K}_{i,j}(Q)=\beta(-K_{Q,z^i,{\overline
 z}^j})h_2^{-1}\beta^*, $$ where
 $h_1=\alpha^{*}\cdot\alpha, h_2=\beta^*\cdot \beta.$}

\begin{thm}\label{mainthm2}
Let $P,Q\in {\mathcal P}_n(\Omega, {\mathcal
 U})\cap {\mathcal A}_n(\Omega,{\mathcal U})$. Then the following statements are equivalent

$(1)$\,\,$P\sim_{u}Q$

$(2)$\,\, ${\mathscr K}_{i,j}(P)(\lambda)\sim_{u} {\mathscr
K}_{i,j}(Q)(\lambda), \forall \lambda\in \Omega, \,\,\mbox{and}\,\,
i,j=0,1,\cdots,n-1.$

$(3)$\,\,There exists $\lambda_0\in
 \Omega$ and invertible operator $X_{\lambda_0}$ such that
 \begin{enumerate}
 \item[(i)] $X_{\lambda_0}(\alpha_1(-K_{P,z^i,{\overline
 z}^j})h_1^{-1}\alpha_1^*)(\lambda_0)= (\alpha_2(-K_{Q,z^i,{\overline
 z}^j})h_2^{-1}\alpha_2^*(\lambda_0))X_{\lambda_0}, 0\leq i,j,$;
\item[(ii)] $ X_{\lambda_0} \partial^{i}P(\lambda_0)\overline{\partial}^jP(\lambda_0)= \partial^{i}Q(\lambda_0)\overline{\partial}^jQ(\lambda_0)X_{\lambda_0}, 0\leq
i,j;$
  \end{enumerate}

\end{thm}

\begin{proof} By Theorem \ref{mainthm1}, we only need to prove the
equivalence of (1) and (3).  If there exists an invertible operator
$X\in {\mathcal L}({\mathcal H})$ such that
$XP_1(\lambda)=P_2(\lambda)X$, then we have
$$X\partial^iP_1(\lambda)\overline{\partial}^iP_1(\lambda)X^{-1}=\partial^iP_2(\lambda)\overline{\partial}^jP_2(\lambda),
\forall \lambda\in \Omega, i,j\in Z^{\infty}_+.$$ By the similar
proof of Lemma \ref{tran}, we also obtain $X{\mathscr
K}_{i,j}(P_1)(\lambda)={\mathscr K}_{i,j}(P_2)(\lambda)X,\forall
i,j\in Z^{\infty}_+$. Thus, by the curvature formulae in Lemma
\ref{main}, the sufficient part follows.

On the other hand, for a fixed $\lambda_0\in \Omega$, there exists
an invertible operator $X_{\lambda_0}$ such that
$$X_{\lambda_0}(\alpha_1(-K_{P,z^i,{\overline
 z}^j})h_1^{-1}\alpha_1^*)(\lambda_0)= (\alpha_2(-K_{Q,z^i,{\overline
 z}^j})h_2^{-1}\alpha_2^*(\lambda_0))X_{\lambda_0}, 0\leq i,j,$$
that is equivalent to say that $X_{\lambda_0}{\mathscr
K}_{i,j}(P_1)(\lambda_0)={\mathscr
K}_{i,j}(P_2)(\lambda_0)X_{\lambda_0},\forall i,j\in Z^{\infty}_+$.
Then we have that
$$X_{\lambda_0}\partial^iP_1(\lambda_0)\overline{\partial}^jP_1(\lambda_0)=\partial^iP_2(\lambda_0)\overline{\partial}^jP_2(\lambda_0)X_{\lambda_0},
\forall i,j\in Z^{\infty}_+.$$ Suppose $P$ is an 
holomorphic curve. By recalling the formulae \ref{1.2} and
\ref{1.3}, we know for any $I, J\in Z^{\infty}_{+}$, and any
$\lambda\in\Omega$, $\partial^{I}\overline{\partial}^{J}P(\lambda)$
can be represented by a sum of monnomials of the form (the form does
not depend on $P$) $\pm
[\overline{\partial}^{I_1}P][\partial^{J_1}P]\cdots[\overline{\partial}^{I_k}P][\partial^{J_k}P]$
and $\pm
[\partial^{J_1}P][\overline{\partial}^{I_1}P]\cdots[\partial^{J_k}P][\overline{\partial}^{I_k}P]$.
Thus, we also have
that$$X_{\lambda_0}\partial^i\overline{\partial}^jP_1(\lambda_0)=\partial^i\overline{\partial}^jP_2(\lambda_0)X_{\lambda_0},
\forall i,j\in Z^{\infty}_+.$$

Note that there exists $\Omega_0 \subset \Omega$ which is a open
neighborhood of $\lambda_0$, such that
$P_i(\lambda)=\sum\limits_{i,j=0}^{\infty}\frac{\partial^i\overline{\partial}^jP_i(\lambda_0)}{i!j!}(\lambda-\lambda_0)^i(\overline{\lambda}-\overline{\lambda_0})^j,
\forall \lambda\in\Omega, i=1,2$.  Thus, this finishes the proof of
sufficient part.
\end{proof}

\begin{cor}

Let  $P_1, P_2$ be the  holomorphic curves defined in the
theorem above 2.9. If $P_1\sim_{u} P_2$, then $K_{P_1,z^i,{\overline
 z}^j}(\lambda)\sim_s K_{P_2,z^i,{\overline
 z}^j}(\lambda), \forall \lambda\in \Omega$.

\end{cor}

\begin{proof}
When $P_1\sim_{s} P_2$, i,e. there exists unitary operator $U$ such
that $UP_1(\lambda)U^{*}=P_2(\lambda)$ for any $\lambda\in \Omega$.
Furthermore, $U{\mathscr K}_{i,j}(P_1)(\lambda)={\mathscr
K}_{i,j}(P_2)(\lambda)U,\forall i,j\in Z^{\infty}_+$. And
$$U(\alpha_1(-K_{E_1,z^i,{\overline
 z}^j})h_1^{-1}\alpha_1^*)= (\alpha_2(-K_{E_2,z^i,{\overline
 z}^j})h_2^{-1}\alpha_2^*U, 0\leq i,j.$$
Then for any $\lambda\in \Omega$ and $i,j\in Z^{+}_{\infty}$, we
obtain
$$
U\alpha_1(\lambda)(-K_{E_1,z^i,{\overline
 z}^j}(\lambda))h_1^{-1}(\lambda)\alpha_1^*(\lambda)=\alpha_2(\lambda)(-K_{E_2,z^i,{\overline
 z}^j}(\lambda))h_2^{-1}(\lambda)\alpha_2^*(\lambda)U$$
 By multiplying $\alpha^*_2(\lambda)$ and $\alpha_1(\lambda)$ on the
 both sides of the formula above respectively,
$$ \alpha^*_2(\lambda)U\alpha_1(\lambda)(-K_{E_1,z^i,{\overline
 z}^j}(\lambda))h_1^{-1}(\lambda)\alpha_1^*(\lambda)\alpha_1(\lambda)=\alpha_2^*(\lambda)\alpha_2(\lambda)(-K_{E_2,z^i,{\overline
 z}^j}(\lambda))h_2^{-1}(\lambda)\alpha_2^*(\lambda)U\alpha_1(\lambda)$$
That means
$$ h^{-1}_2(\lambda)\alpha^*_2(\lambda)U\alpha_1(\lambda)K_{E_1,z^i,{\overline
 z}^j}(\lambda)=K_{E_2,z^i,{\overline
 z}^j}(\lambda)h_2^{-1}(\lambda)\alpha_2^*(\lambda)U\alpha_1(\lambda).$$
 Then we only need to prove
 $Y_{\lambda}:=h^{-1}_2(\lambda)\alpha^*_2(\lambda)U\alpha_1(\lambda)$ is an
 invertible matrix for any $\lambda\in \Omega$.  Now we set
 $Z_{\lambda}:=h^{-1}_1(\lambda)\alpha^*_1(\lambda)U^{*}\alpha_2(\lambda)$.
By $UP_1(\lambda)U^{*}=P_2(\lambda)$,
 then we obtain
 $$\begin{array}{llll}
 Y_{\lambda}Z_{\lambda}&=&h^{-1}_2(\lambda)\alpha^*_2(\lambda)U\alpha_1(\lambda)h^{-1}_1(\lambda)\alpha^*_1(\lambda)U^{*}\alpha_2(\lambda) \\
 &=&h^{-1}_2(\lambda)\alpha^*_2(\lambda)U\alpha_1(\lambda)h^{-1}_1(\lambda)\alpha^*_1(\lambda)U^{*}\alpha_2(\lambda) \\
 &=&h^{-1}_2(\lambda)\alpha^*_2(\lambda)UP_1(\lambda)U^{*}\alpha_2(\lambda) \\
 &=&h^{-1}_2(\lambda)\alpha^*_2(\lambda)P_2(\lambda)\alpha_2(\lambda) \\
 &=&h^{-1}_2(\lambda)\alpha^*_2(\lambda)(\alpha_2(\lambda)h^{-1}_2(\lambda)\alpha^*_2(\lambda))\alpha_2(\lambda) \\
 &=&h^{-1}_2(\lambda)h_2(\lambda)h^{-1}_2(\lambda)h_2(\lambda) \\
 &=&I_n
 \end{array}$$
 Similarly, we also can prove that $Z_{\lambda}Y_{\lambda}=I_n$.
 Note that $Y_{\lambda}K_{E_1,z^i,{\overline
 z}^j}(\lambda)=K_{E_2,z^i,{\overline
 z}^j}(\lambda)Y_{\lambda}, \forall \lambda\in \Omega,$ this finishes the proof.

\end{proof}

\subsection{ Extended holomorphic curve}

For any ${\mathcal U}$ a unital $C^*$-algebra, let ${\mathcal I}(
{\mathcal U})$ denote the set of all of the idempotents in
${\mathcal U}$.  ${\mathcal I}({\mathcal U})$  is called as the
extended Grassmann manifold and it's properties can be found in
\cite{MS3,S}. In the end of this section, we will generalized the
results on the holomorphic curves on Grassmann manifold of
$C^*$-algebras to the holomorphic curves on extended Grassmann
manifold case.

\begin{defn}
We say a real-analytic map ${\mathcal I}:\Omega\rightarrow {\mathcal
I}({\mathcal U})$ as an extended holomorphic curve if the following
statements hold $$ \overline{\partial} {\mathcal I}(\lambda)={\mathcal
I}(\lambda)\overline{\partial} {\mathcal I}(\lambda), \partial{\mathcal I}(\lambda)=\partial{\mathcal I}(\lambda){\mathcal
I}(\lambda),\overline{\partial} {\mathcal I}(\lambda){\mathcal
I}(\lambda)=0,{\mathcal I}(\lambda){\partial} {\mathcal
I}(\lambda)=0, \forall \lambda \in \Omega.$$
\end{defn}

\begin{defn}\label{Ide}

For ${\mathcal U}={\mathscr L}(l^2(\mathbb{N},B))$,  let ${\mathcal
I}_n(\Omega ,{\mathcal U})$ denotes the extended holomorphic curve
${\mathcal I}:\Omega\rightarrow {\mathcal U}$ which satisfies:
 ${\mathcal I}(\lambda)=\alpha(\lambda)\cdot(\beta^{*}(\lambda)\cdot\alpha(\lambda))^{-1}\cdot\beta^{*}(\lambda),
\forall \lambda \in \Omega,$ where
 $\alpha,\beta : \Omega\rightarrow 
{\mathscr L}(\mathbb{C}^n, l^2(\mathbb{N},B))$ as follows 
$$\alpha(\lambda)(w_1,w_2,\cdots,w_n)=\sum\limits_{i=1}^nw_i\alpha_i(\lambda), $$ 
$$\beta(\lambda)(w_1,w_2,\cdots,w_n)=\sum\limits_{i=1}^nw_i\beta_i(\lambda)$$
and  
 $\alpha_i, \beta_i:\Omega \rightarrow 
l^2(\mathbb{N},B), i=1,2\cdots,n$ are holomorphic functions.

\end{defn}

Similar to the proof of Lemma \ref{hlc}, we have the following proposition: 

 \begin{prop}
Let $B$ be a unital $C^*$-algebra and ${\mathcal U}={\mathscr
L}(l^2(\mathbb{N},B))$. Then the $C^{\infty}$ map ${\mathcal
I}:=\alpha\cdot(\beta^{*}\cdot\alpha)^{-1}\cdot\beta^{*}$ defined in
definition \ref{Ide} is an extended holomorphic curve.
\end{prop}

\begin{defn}\label{GECur}
 Let ${\mathcal I}\in {\mathcal
I}_n(\Omega ,{\mathcal U})$ be an extended holomorphic curve. We say
${\mathscr K}_{i,j}(\mathcal I), 0\leq i,j$ defined as the following
to be the extended curvature and covariant derivatives of curvature
of the extended holomorphic curve $\mathcal I$ on the extended
Grassmann manifold  ${\mathcal \mathcal I({\mathcal U})}$, where
 \begin{align*}  {\mathscr K}(\mathcal I):={\mathscr K}_{0,0}(\mathcal I)=\overline{\partial}\mathcal I{\partial \mathcal I},
  {\mathscr K}_{i+1,j}(\mathcal I)=\mathcal I(\partial ({\mathscr K}_{i,j}(\mathcal I))),
{\mathscr K}_{i,j+1}(\mathcal I)=(\overline{\partial}({\mathscr
K}_{i,j}(\mathcal I)))\mathcal I, \forall i,j\in \mathbb{Z}_{+}.
\end{align*}

\end{defn}

\begin{defn}\label{excurvature1}  Let ${\mathcal I}\in {\mathcal
I}_n(\Omega ,{\mathcal U})$  be an extended holomorphic curve on
extended Grassmann manifold. Set
 $$h(\lambda)=<\alpha(\lambda),\beta(\lambda)>=\beta^{*}(\lambda)\cdot\alpha(\lambda),$$
 then extended curvature function of $ {\mathcal I}$ is defined as
$$K_{\mathcal I}=-\frac{\partial}{\partial \overline{\lambda}}(h^{-1}\frac{\partial h}{\partial \lambda}), \mbox{for all}~ \lambda\in \Omega.$$
And the partial derivatives of extended curvature are defined as the
following:

 (1)\,\,$K_{{\mathcal I},\overline{\lambda}}=\frac{\partial}{\overline{\partial}\lambda}(K_{{\mathcal I}});$

 (2)\,\,$K_{{\mathcal I},\lambda}=\frac{\partial}{\partial \lambda}(K_{\mathcal I})+[h^{-1}\frac{\partial}{\partial \lambda}h,K_{{\mathcal I}}],~\mbox{for all}~ \lambda\in \Omega. $

By the definition above,  we can get the partial derivatives of
extended curvature: $K_{{\mathcal
I},\lambda^i\overline{\lambda}^j}$, $i,j\in \mathbb{N}\cup \{0\}$ by
using the inductive formulaes above.
\end{defn}

\begin{rem}\label{mainid} Let ${\mathcal I}\in {\mathcal
I}_n(\Omega ,{\mathcal U})$. By the similar proof of the projection
case, we also have the following statement:
$${\mathscr K}_{i,j}({\mathcal I})(\lambda)=\alpha(\lambda)(-K_{{\mathcal I},z^i,{\overline
 z}^j})h^{-1}\alpha^*(\lambda), \forall \lambda\in \Omega,$$
 where $h=\beta^*\cdot \alpha$.

\end{rem}

Similar to the proof of Theorem \ref{mainthm2}, we can obtain the
following similarity theorem of the extended holomorphic curve on
the extended Grassmann manifold.

\begin{thm}\label{mainthm3}
Let $\mathcal{I},\mathcal{J}\in {\mathcal I}_n(\Omega, {\mathcal
 U})$. Then the following statements are equivalent

$(1)$\,\,$\mathcal{I}\sim_{s}\mathcal{J}$

$(2)$\,\,There exists $\lambda_0\in
 \Omega$ and invertible operator $X_{\lambda_0}$ such that
 \begin{enumerate}
 \item[(i)] $X_{\lambda_0}({\mathscr K}_{i,j}(\mathcal{I}))(\lambda_0)= ({\mathscr K}_{i,j}(\mathcal{J}))(\lambda_0))X_{\lambda_0}, 0\leq i,j$;
\item[(ii)] $ X_{\lambda_0} \partial^{i}\mathcal{I}(\lambda_0)\overline{\partial}^j\mathcal{I}(\lambda_0)= \partial^{i}\mathcal{J}(\lambda_0)\overline{\partial}^j\mathcal{J}(\lambda_0)X_{\lambda_0}, 0\leq
i,j;$
  \end{enumerate}

\end{thm}

\section{Cowen-Douglas operators }

We will discuss the relationship between  holomorphic curves in
$C^*$-algebras and holomorphic curves in Cowen-Douglas theory. We
will prove for any $T$  the projection-valued functions $P_T$ (with
$RanP_T(\lambda)=Ker(T-\lambda)$) will belongs to  ${\mathcal
A}_{n-1}(\Omega, {\mathcal L}({\mathcal H})).$  Secondly, we will give a similarity
theorem for Cowen-Douglas operators by using the curvature formulas of appropriate extended holomorphic curves.

\begin{defn} \cite{CD}
 Let $\Omega$ be a
bounded and connected open subset of the complex plane $\mathbb{C}$
and $n$ a positive integer. Let ${\mathcal B}_n(\Omega)$ denote the
set of operators $T$ in ${\mathcal L}({\mathcal H})$ satisfying:
$$
\begin{array}{cl}
(1)&{\Omega}{\subset}{\sigma}(T):=\{\lambda{\in}\mathbb{C},
T-\lambda \,\,
 {\mbox{is not invertible}} \}; \\
(2)&Ran(T-\lambda)={\mathcal H} \,\,\, {\mbox{for every}}\,\, \lambda{\in}{\Omega}; \\
(3)&\bigvee\limits_{\lambda{\in}{\Omega}}\{ker(T-\lambda):
\lambda{\in}{\Omega}\}={\mathcal
 H}; \,\, {\mbox{and}} \\
(4)&dimker(T-\lambda)=n \,\,\,{\mbox{for every}} \,\,
\lambda{\in}{\Omega}.
 \end{array}
 $$ We call an operator in
${\mathcal B}_n(\Omega)$ a Cowen-Douglas operator with index $n$.

Let $T$ be an operator in ${\mathcal B}_n(\Omega)$. Let $E_T$ denote
the Hermitian holomorphic vector bundle induced by $T$. In this
sense, we set $E_T(\lambda)=\mbox{Ker}(T-\lambda)$ and
$$\alpha=(\alpha_1,\alpha_2,\cdots,\alpha_n)$$
where $\{\alpha_i(\lambda)\}^{n}_{i=1}$ are the frames of
$E_T(\lambda)$ for any $\lambda\in \Omega.$

Following M. I. Cowen and R. G. Douglas, a curvature function for
$T\in B_n(\Omega)$ or $E_T$ can be defined as:

\end{defn}

\begin{defn}\label{curvature}\cite{CD}
$$K_T(\lambda)
=-\frac{\partial}{\partial \overline{\lambda}}(h^{-1}\frac{\partial
h}{\partial \lambda}), \mbox{for all}~ \lambda\in \Omega,$$

where the metric
$$h(\lambda)=(<\alpha_j(\lambda),\alpha_i(\lambda)>)_{n\times n},\forall
\lambda\in \Omega,$$ and
$\{\alpha_1(\lambda),\alpha_2(\lambda),\cdots,{\alpha}_n(\lambda)\}
$ are the frames of $E_T$. The partial derivatives of curvature are
defined as the following:

Let $E_T$ be a Hermitian holomorphic bundle induced by a Cowen-Douglas operator $T$, and $K_T$ be a curvature of $T$. Then we have that\\
 (1)\,\,$K_{T,\overline{z}}=\frac{\partial}{\overline{\partial}\lambda}(K_{T});$\\
 (2)\,\,$K_{T,z}=\frac{\partial}{\partial z}(K_T)+[h^{-1}\frac{\partial}{\partial z}h,K_{T}].$

By the definition above,  we can get the covariant  derivatives of
curvature: $K_{T,z^i\overline{z}^j}$, $i,j\in \mathbb{N}\cup \{0\}$
by using the inductive formulaes above.

\end{defn}

\begin{rem}
In the Definition \ref{curvature}, the curvature $K_T$ and the
covariant derivatives of curvature $K_{T,z^i\overline{z}^j}$ are the
matrices form of curvature and its covariant derivatives according
to frame $\alpha=\{\alpha_1,\alpha_2,\cdots,\alpha_n\}$. In
\cite{CD}, M. I .Cowen and R. G. Douglas use the notations
$K_T(\alpha)$ and $K_{T,z^i\overline{z}^j}(\alpha)$ and $K_T$ and
$K_{T,z^i\overline{z}^j}$ are regarded as bundle maps on $E_T$.
\end{rem}

\begin{defn}\label{PT}
 Let $T\in {\mathcal L({\mathcal H})}$ be a Cowen-Douglas operator.
 Suppose that $\{\alpha_i\}^{n}_{i=1}$ are the frames of
$E_T$ for any $\lambda\in \Omega.$  Let
$\alpha=(\alpha_1,\alpha_2,\cdots,\alpha_n)$, and $h$ denote the
metric of $E_T$ induced by $\alpha$.

Since ${\mathcal H}$ is a separable Hilbert space, for any fixed
$\lambda\in \Omega$ and $0\leq i\leq n$, $\alpha_i(\lambda)$ can be
regarded as an element of $l^2$. So in the following part, we always
will not  discriminate $\alpha_i$ and its coordinate in $l^2$. For
example, just like in Hardy space, we will always regard the kernel
function $(1-zw)^{-1}$ as same to the element $(1,w, w^2, \cdots,
w^n\cdots )\in l^2, z, w\in \mathbb{D}$.

  In the Definition \ref{holomorphic}, if
we choose the C*-algebra $B$ equal to $\mathbb{C}$, then ${\mathcal
U}:={\mathscr L}(l^2(\mathbb{N},B))={\mathcal L({\mathcal H})}$.
Note that 
 $$\begin{array}{llll}
 h(\lambda)&:=&((<\alpha_j(\lambda),\alpha_i(\lambda)>))_{i,j=1}^{n}\\
  &=&\alpha^{*}(\lambda)\cdot\alpha(\lambda).
 \end{array}$$ where $h$ is the metric of holomorphic bundle $E_T$. 

Define $P_T: \Omega\rightarrow {\mathcal P({\mathcal L({\mathcal
H})})}$ as follows:
$$P_T(\lambda):=\alpha(\lambda)\cdot(h(\lambda))^{-1}\cdot\alpha^{*}(\lambda) $$

 Then  $P_{T}:\Omega\rightarrow {\mathcal P({\mathcal L({\mathcal
H})})}$ is also an  holomorphic curve. For any $\lambda\in
\Omega$, by a direct computation as we mentioned in chapter two,
$P_{T}(\lambda)$ is the matrix form of the projection from
${\mathcal H}$ to $\mbox{Ker}(T-\lambda)$ as an operator, In the
following, we will also not discriminate them.
\end{defn}

In order to prove the two theorems above, we need the following
notations and lemmas.

\begin{defn}\cite{CD}
Let E be a  Hermitian holomorphic vector bundle of rank $n$ over
$\Omega$ with metric-preserving connection D and curvature $K_{E}$,
(or $K$, for simplicity).  For any $\lambda\in \Omega$, let
$\mathscr{A}^{K}(\lambda)$ denote the algebra generated by the
curvatures and their covariant derivatives at $\lambda$. Choosing a
particular frame $\alpha$, we use symbol
$\mathscr{A}^{K}(S)(\lambda)$ denote the the matrix algebra
generated by the matrices of the curvatures and their covariant
derivatives at $\lambda$.
 Let $\tau\in \mathbb{Z}_{+}$,
Let $\mathscr{A}^{K}_{\tau}(\lambda)$ denote the algebra generated
by the covariant derivatives of the curvature of order at most
$\tau$.
\end{defn}

\begin{defn}\cite{CD}
The $j$th coalescing set for the curvature, denoted $\mathscr{C}_j$,
is the set of all $\lambda$ in $\Omega$ such that
dim$\mathscr{A}^{K}_{\tau}(\lambda)$ fails to be locally constant
for at least some $i,0<i<\tau.$ And the $\tau$ th coalescing set is
closed and nowhere dense in $\Omega$.
\end{defn}

\begin{defn}\cite{CD}
The generating order of the connection $D$, denoted by
$g(D,\Omega)$, to be the smallest integer $\tau$ such that
$\mathscr{A}^{K}(\lambda)$  is generated by covariant derivatives of
the curvature of total order at most $\tau$, for all $\lambda$ in
$\Omega\cap {\mathscr{C}_{\tau}}^c$.
\end{defn}

\begin{rem}
Let $s$ and $\tilde{s}$ be two frame of $E$ and $s=\tilde{s}A$, $A$
is an invertible holomorphic matrix-valued function. Then
$K_{z^i\overline{z}^j}(s)=AK_{z^i\overline{z}^j}(\tilde{s})A^{-1}$.
Thus we can know that the  generating order does not depend the the
frame $s$.
\end{rem}

\begin{lem}\label{MnC}\cite{CD}
Let E be a $C^{\infty}$ Hermitian vector bundle of dimension $n$
over an open subset $\Omega$ in $\mathbb{C}^k$, with
metric-preserving connection $D$. Let $\lambda_0$ be in $\Omega$,
$\lambda_0$ not in the coalescing set for the curvature. Then there
exists a neighborhood $\Omega_0$, of $z_0$ in $\Omega$ and a
$C^\infty$ orthonormal frame $s$ for $E$ over $\Omega$, with the
properties: and $\mathscr{A}^{K}(S)(\lambda)=M({\mathscr{N}},\otimes
{\mathscr{M}})$ for all $\lambda$ in $\Omega$. And the generating
order is less than $n-1$.

\end{lem}

\begin{thm}\label{pnu}
Let $f:\Omega\rightarrow Gr(n,{\mathcal H})$ be a holomorphic curve
with
$$f(\lambda)=\bigvee\{\alpha_1(\lambda),\alpha_2(\lambda),\cdots,\alpha_n(\lambda)\},
\forall \lambda\in \Omega.$$ Let $E_f$ be the pull back bundle
induced by $f$ with metric-preserving connection. Set
$\alpha=(\alpha_1(\lambda),\alpha_2(\lambda),\cdots,\alpha_n(\lambda)),
h=\alpha^*\alpha$. Let $P$ be the  holomorphic curves
defined in Definition 2.1 according to $\alpha$ and $h, i=1,2$
respectively. If $\bigvee\limits_{\lambda\in \Omega}
f(\lambda)={\mathcal H}$,  then $P\in {\mathcal A}_{n-1}(\Omega,
{\mathcal L}({\mathcal H})). $

\end{thm}

\begin{proof}
For each $\lambda\in \Omega$ and every $\tau\in \mathbb{Z}_{+}\cup
\{\infty\}$,
 set
 $${\mathcal F}^{\tau}_{\lambda}=\{{\mathscr K}_{i,j}(P)(\lambda):i,j\in \mathbb{Z}_{+}, i,j\leq \tau\}.$$
 And let
${\mathscr F}^{\tau}_{\lambda}$ be the closure of $*$-subalgebra of
$\mathcal L({\mathcal H})$ generated by ${\mathcal
F}^{\tau}_{\lambda}$, where ${\mathscr K}_{i,j}(P)$ is the covariant
derivative with the property
$${\mathscr K}_{i,j}(P)=\alpha(-K_{P,z^i,{\overline
 z}^j})h_1^{-1}\alpha^*.$$
Recall that $${\mathcal
B}^{\tau}_{\lambda}=\{\overline{\partial}^{J}P(\lambda)\partial^{I}P(\lambda):I,J\in
\mathbb{Z}_{+}, I,J\leq \tau\}.$$ And ${\mathcal
U}^{\tau}_{\lambda}$ be the closure of $*$-subalgebra of $\mathcal
U$ generated by ${\mathcal B}^{\tau}_{\lambda}$ (See in 1.2). Note
that when $i,j\leq 2,$ we have
$${\mathscr K}_{0,0}(P)=\overline{\partial}P\partial P,
 {\mathscr K}_{1,0}(P)=\overline{\partial}^2P\partial P,
{\mathscr K}_{0,1}(P)=\overline{\partial}P\partial^2 P, {\mathscr
K}_{1,1}(P)=\overline{\partial}^2P\partial^2
 P-2(\overline{\partial}P\partial P)^2.$$
By the inductive formulaes for all ${\mathscr K}_{i,j}(P),i,j\in
\mathbb{Z}_{+}$ (See Definition 2.1), $${\mathscr
K}_{i+1,j}(P)=P(\partial ({\mathscr K}_{i,j}(P))), {\mathscr
K}_{i,j+1}(P)=(\overline{\partial}({\mathscr K}_{i,j}(P)))P,$$ and
the same proof of Lemma \ref{tran}, we can prove that ${\mathcal
F}^{\tau}_{\lambda}={\mathcal B}^{\tau}_{\lambda}, \forall \tau\in
\mathbb{Z}_{+}$, and ${\mathscr F}^{\tau}_{\lambda}={\mathcal
U}^{\tau}_{\lambda}, \forall \tau\in \mathbb{Z}_{+}$. In special,
${\mathscr F}^{\infty}_{\lambda}={\mathcal U}^{\infty}_{\lambda},
\forall \tau\in \mathbb{Z}_{+}, \forall \lambda\in\Omega.$

By the covariant curvature formulae ${\mathscr
K}_{i,j}(P)=\alpha(-K_{P,z^i,{\overline
 z}^j})h_1^{-1}\alpha^*=\alpha(-K_{E,z^i,{\overline
 z}^j})h_1^{-1}\alpha^*$, ${\mathscr
F}^{\tau}_{\lambda}\cong \mathscr{A}^{K}_{\tau}(\lambda)$. By Lemma
\ref{MnC}, we know that for any $\lambda\in \Omega$ minus the
coalescing set, ${\mathcal U}^{\infty}_{\lambda}\cong
M({\mathscr{N}},\otimes {\mathscr{M}})$ and generating order
according to
$\overline{\partial}^{J}P(\lambda)\partial^{I}P(\lambda)$ is no more
than $n-1.$ Thus, the conditions (1) and (2) of Definition
\ref{Uinfi} follows. Since $\bigvee\limits_{\lambda\in \Omega}
f(\lambda)={\mathcal H}$, then any operator $a$ in ${\mathcal
L}({\mathcal H})$ with $aP(\lambda)=0, \forall \lambda\in \Omega$
would be equal to zero.
\end{proof}

 By Theorem \ref{pnu}, we can see that Theorem
\ref{MSlemma} is generalization of the unitary classification
theorem of Cowen-Douglas operators in\cite{CD}. One also can give
another proof  by using By Theorem \ref{pnu} and Theorem
\ref{MSlemma}. And the $\overline{\partial}^J P\partial^I P, I,J\leq
n$ mentioned in Theorem \ref{MSlemma} are essentially the curvature and
covariant derivatives of corresponding Cowen-Douglas operators. In
this sense, we can also regards the following curvature formulaes
mentioned in Definition 2.1:
$${\mathscr K}_{i+1,j}(P)=P(\partial ({\mathscr K}_{i,j}(P))),
{\mathscr K}_{i,j+1}(P)=(\overline{\partial}({\mathscr
K}_{i,j}(P)))P,$$as the curvature formulaes of  holomorphic
curves in general C*-algebras.

\subsection{Similarity of Cowen-Douglas operators and extended
holomorphic curves}

\begin{defn}\label{IT}
 Let $T\in B_n(\Omega)$ be a Cowen-Douglas operator.
 Suppose that $\{\alpha_i\}^{n}_{i=1}$ are the frames of
$E_T$ for any $\lambda\in \Omega.$  Let
$\alpha=(\alpha_1,\alpha_2,\cdots,\alpha_n)$, and $h$ denote the
metric of $E_T$ induced by $\alpha$. As we point out in Definition
\ref{PT}, we always will not  discriminate $\alpha_i$ and its
coordinate in $l^2$.

Choosing $B=\mathbb{C}$ in  Definition \ref{Ide}, now define
${\mathcal I}_T: \Omega\rightarrow {\mathcal I}(\mathcal
{L}(\mathcal {H}))$ as follows:
$${\mathcal I}_T(\lambda):=\alpha(\lambda)\cdot(\beta^{*}(\lambda)\alpha(\lambda))^{-1}\cdot\beta^{*}(\lambda)$$
 Then  ${\mathcal I}_T: \Omega\rightarrow  {\mathcal I}(\mathcal
{L}(\mathcal {H}))$ is also an extended holomorphic curve. For any
$\lambda\in \Omega$, similar to Definition \ref{PT}, ${\mathcal
I}_T(\lambda)$ is the matrix form of the idempotent from ${\mathcal
H}$ to $\mbox{Ker}(T-\lambda)$ as an operator, In the following, we
will also not discriminate them.
\end{defn}

By Theorem \ref{mainthm3} and using the extended curvature and
covariant derivatives given in Definition \ref{GECur} , we have the
following proposition about the similarity of extended holomorphic
curves induced by Cowen-Douglas operators.
\begin{prop} Let $T_1,T_2\in B_n(\Omega)$. Let ${\mathcal
I}_{T_1}$ and ${\mathcal I}_{T_2}$ according to $T_i$, $i=1,2$  be
two extended holomorphic curves given by Definition \ref{IT}.
 Then  ${\mathcal
I}_{T_1}\sim_s {\mathcal I}_{T_2}$ if and only if there exists
$\lambda_0\in
 \Omega$ and invertible operator $X_{\lambda_0}$ such that
 \begin{enumerate}
 \item[(i)] $X_{\lambda_0}({\mathscr K}_{i,j}(\mathcal{I}_{T_1}))(\lambda_0)= ({\mathscr K}_{i,j}(\mathcal{I}_{T_2}))(\lambda_0))X_{\lambda_0}, 0\leq i,j$;
\item[(ii)] $ X_{\lambda_0} \partial^{i}\mathcal{I}_{T_1}(\lambda_0)\overline{\partial}^j\mathcal{I}_{T_1}(\lambda_0)
=\partial^{i}\mathcal{I}_{T_2}(\lambda_0)\overline{\partial}^j\mathcal{I}_{T_2}(\lambda_0)X_{\lambda_0},
0\leq i,j;$
  \end{enumerate}
 \end{prop}

Since ${\mathcal I}_{T_1}(\lambda)$ and ${\mathcal
I}_{T_2}(\lambda)$ are idempotents on to $\mbox{Ker}(T_i-\lambda),
\lambda\in \Omega$ respectively. Then ${\mathcal I}_{T_1}\sim_s
{\mathcal I}_{T_2}$ will implies the similarity of $T_1$ and $T_2$.
And on the other hand, when $T_1$ and $T_2$ are similar, one also
can find two extended holomorphic curves ${\mathcal I}_{T_1}$ and
${\mathcal I}_{T_2}$ are similarity equivalent.  Thus, we have a
sufficient an necessary condition which involving the extended
curvature for the similarity of any two Cowen-Douglas operators with
high rank as the following:

\begin{prop} Let $T_1,T_2\in B_n(\Omega)$.   Then $T_1\sim T_2$
if and only if  there exists two extended holomorphic curves
${\mathcal I}_{T_1}$ and ${\mathcal I}_{T_2}$ according to $T_i$,
$i=1,2$ which are defined in Definition \ref{IT} and an invertible
operator $X_{\lambda_0}$ such that the following statements hold for
some fixed $\lambda_0\in \Omega$,
 \begin{enumerate}
 \item[(1)] $X_{\lambda_0}({\mathscr K}_{i,j}(\mathcal{I}_{T_1}))(\lambda_0)= ({\mathscr K}_{i,j}(\mathcal{I}_{T_2}))(\lambda_0))X_{\lambda_0}, 0\leq i,j$;
\item[(2)] $ X_{\lambda_0} \partial^{i}\mathcal{I}_{T_1}(\lambda_0)\overline{\partial}^j\mathcal{I}_{T_1}(\lambda_0)
=\partial^{i}\mathcal{I}_{T_2}(\lambda_0)\overline{\partial}^j\mathcal{I}_{T_2}(\lambda_0)X_{\lambda_0},
0\leq i,j,$
  \end{enumerate}

 \end{prop}

\begin{rem}
Let $T,S\in B_1(\mathbb{D})$. Suppose that 
$$ker(T-\lambda)=\bigvee\{\alpha_0,\alpha_1\lambda, \cdots,\alpha_n\lambda^n,\cdots\},ker(S-\lambda)=\bigvee\{\beta_0,\beta_1\lambda, \cdots,\beta_n\lambda^n,\cdots\}$$
Set $$\alpha(\lambda)=\{\alpha_0,\alpha_1\lambda, \cdots,\alpha_n\lambda^n,\cdots\}^T,$$ 
$$\beta(\lambda)=\{\beta_0,\beta_1\lambda, \cdots,\beta_n\lambda^n,\cdots\}^T,$$ 
$$\gamma(\lambda)=\{\frac{\beta^2_0}{\alpha_0},\frac{\beta^2_1}{\alpha_1}\lambda, \cdots,\frac{\beta^2_n}{\alpha_n}\lambda^n,\cdots\}^T,$$ 
and 
$h(\lambda)=\sum\limits_{i=0}^{\infty}\beta_i^2(|\lambda|^2)^i.$

Now we choose the canonical extended holomorphic curves ${\mathcal I}_S , {\mathcal I}_T$ corresponding to $S$ and $T$ as the follows: 
$${\mathcal I}_S=\beta h^{-1} \beta^*, {\mathcal I}_T=\alpha h^{-1}\gamma^* $$

In the following,  we will point out the relationship between the similarity of ${\mathcal I}_S$ and ${\mathcal I}_T$ and  the result of A. L. Shields.

Firstly, by a direct computation, we have the following claim: 

{\bf{Claim}}\,\, Suppose that $P_1=\alpha h^{-1} \beta^*, P_2=\tilde{\alpha}h^{-1}\tilde{\beta}^*$ be two extended holomorphic curves and $h=\beta^*\alpha=\tilde{\beta}\tilde{\alpha}$. 
If there exists an invertible operator $X$ such that
$$X\partial^{i}P_1\overline{\partial}^jP_1
=\partial^{i}P_2\overline{\partial}^jP_2X,$$ then $X\partial^{i}\alpha\overline{\partial}^j\beta^*
=\partial^{i}\tilde{\alpha}\overline{\partial}^j\tilde{\beta}^*X$

Now suppose that there exists an invertible operator $X$ such that  $$X\partial^{i}{\mathcal I}_T\overline{\partial}^j{\mathcal I}_T
=\partial^{i}{\mathcal I}_S\overline{\partial}^j{\mathcal I}_SX, \forall i,j \in {\mathbb{N}}.$$  Note that $\gamma^*\alpha=\beta^*\beta=h$. Then we have  
$$X\partial^{i}\alpha\overline{\partial}^j\gamma^*
=\partial^{i}\beta\overline{\partial}^j\beta^*X$$

Choosing $\lambda=0$, then we have 
$$\partial^{i}\alpha(0)\overline{\partial}^j\gamma^*(0)=\frac{\alpha_i\beta^2_{j}}{\alpha_{j}}i!j!e_{i+1,j+1},  \forall i,j \in {\mathbb{N}} $$
and 
$$\partial^{i}\beta(0)\overline{\partial}^j\beta^*(0)=\beta_i\beta_ji!j!e_{i+1,j+1},  \forall i,j \in {\mathbb{N}}$$
where $e_{i,j}$ denote the infinite matrix which satisfies  that $(i,j)^{th}$ entry equals to one and other entries are all zero.

Set $((x_{i,j}))$ to be the matrix form of $X$ . Without loss of generality, we can  assume $x_{1,1}$ is not equal to zero.   Then we have 
$$((x_{i,j}))\frac{\alpha_i\beta^2_{j}}{\alpha_{j}}e_{i+1,j+1}=\beta_i\beta_je_{i+1,j+1}((x_{i,j})), \forall i,j \in {\mathbb{N}}$$
Choosing $j=i+1$, we have $x_{n,n}=\frac{\beta_{n-1}\alpha_0}{\alpha_{n-1}\alpha_0}x_{1,1}$. By the result of A. L .Shields, we can see if $T$ is not similar to 
$S$, $\frac{\beta_{n}}{\alpha_{n}}$ will goes to $\infty$ or zero, when $n\rightarrow \infty$. That means $X$ could not be invertible. There is an contradiction. 

\end{rem}

\section{Application and Examples}

\subsection{Trace of The Derivatives of Curvature }

In \cite{Treil}, S. Treil and B. D. Wick gave a sufficient condition
for the existence of a bounded analytic projection onto a
holomorphic family of generally infinite dimensional subspaces
induced by some holomorphic bundles. As the corollaries of this,
they also obtained some new results about the Operator Corona
Problem.

Let $E$ be a Hilbert space and $P:\mathbb{D}\rightarrow B(E)$ be an
$C^2$, projection valued function and $P\partial P=0$. In
\cite{Treil}, as the main theorem,  it was proved that if there
exists a bounded non-negative subharmonic function $\psi$ such that
$$\triangle \psi(\lambda)\geq \|\partial P(\lambda)\|^2, \forall \lambda\in \mathbb{D},$$
then there exists some function $V\in L^{\infty}_{E\rightarrow E}$
such that $P-(I-P)VP\in H^{\infty}_{E\rightarrow E}.$

As an application of this result to the similarity of Cowen-Douglas
operators, H.Kwon and S.Treil gave the following theorem to decide
when a contraction operator $T$ will be similar to the $n$ times
copies of $S^*_z$ on Hardy space \cite{Kwon1}.

\begin{thm}\label{Kwon}\cite{Kwon1} {\it Let $\mathbb{D}$ be the unit disk and $T\in
B_n(\mathbb{D})$ with $||T||\leq 1$. For any $\lambda\in\mathbb{D}$,
let $P_T(\lambda)$ be the orthogonal projection onto
$\mbox{Ker}(T-\lambda)$. Then $T$ is similar to the backward shift
operator $S^{*}_n$ if and only if there exists a bounded subharmonic
function $\psi$ such that
$$||\frac{\partial P_T(\lambda)}{\partial
\lambda}||^2_{HS}-\frac{n}{(1-|\lambda|^2)^2}\leq \Delta
\psi(\lambda),\forall \lambda\in \mathbb{D},$$ where $||.||_{HS}$
denotes the Hilbert-Schmidt norm.}
\end{thm}

And this result was also generalized to a general analytic
functional  space  ${\mathcal M_n}$ (see more details in
\cite{Kwon2}) by R. G. Douglas, H. Kwon and S. Treil. If we also
only consider Cowen-Douglas operator class, then we have that

 \begin{thm}\label{Kwon2}\cite{Kwon2}  Let $T\in
B_m(\mathbb{D})$ be an n-hypercontraction. Then $T$ is similar to
the backward shift operator $S^*_{n,C^m}$ if and only if there
exists a bounded subharmonic function $\psi$ such that
$$||\frac{\partial
P_T(\lambda)}{\partial
\lambda}||^2_{HS}-\frac{nm}{(1-|\lambda|^2)^2}= \Delta
\psi(\lambda),\forall \lambda\in \mathbb{D}.$$ \end{thm}

 In \cite{Kwon1}, $-||\frac{\partial P_T(\lambda)}{\partial \lambda}||^2_{HS}$ is
pointed out to be  curvature of the eigenvector bundle
$\mbox{Ker}(T-\lambda I)$ and Hardy shift case of this claim was
also proved in \cite{Kwon1}. And a  proof of $B_1(\Omega)$ case was
given by J. Sarkar in \cite{Sarkar}.

For any given Cowen-Douglas operator $T$, the second author joint
with Y. Hou and H. Kwon proved  that $||\frac{\partial P_T}{\partial
\lambda}||^2_{HS}$ and curvature $K_T$ have the following
relationship:

\begin{prop}\label{traceK}\cite{HJK} For any operator $T\in B_n(\Omega)$, trace
$K_T(\lambda)=-||\partial P_T(\lambda)||^2_{{\mathcal
 HS}}$, $\forall \lambda\in \Omega$.
 \end{prop}

By Lemma \ref{main}, we also can generalize the Proposition
\ref{traceK} to the case of covariant derivatives to give the
description of the trace.

\begin{prop}\label{corvariantf} Let $T\in B_n(\Omega)$ and
 $P_T:\Omega\rightarrow {\mathcal L}({\mathcal H})$ be an
 holomorphic curve with $P_T(\lambda)$ is the projection induced by
 $\mbox{Ker}(T-\lambda)$ for any $\lambda\in \Omega$.  Let $\{\sigma_i\}^{\infty}_{i=1}\}$ be the orthogonal normalize bases of ${\mathcal H}$.
  Then for any $s,t\leq n$, we have
 $$\sum\limits_{i=1}^{\infty}<{\mathscr K}_{s,t}(P_T)\sigma_i, \sigma_i>=-trace(K_{T,z^s,\overline{z}^t}).$$
 \end{prop}
\begin{proof}
Let $e_i=(0,0,\cdots,0,1,0\cdots,0)^T$ be the coordinate of
$\sigma_i$ By Lemma 2.6, we have
$${\mathscr K}_{s,t}(P_T)=-\alpha K_{T,z^s,\overline{z}^t} h^{-1} \alpha^*,
s,t\leq n.$$ Then it follows that for any $i$, we have

\begin{eqnarray*}\begin{array}{lllll} <{\mathscr
K}_{s,t}(P_T)\sigma_i, \sigma_i>&=&-<\alpha K_{T,z^s,\overline{z}^t}
h^{-1} \alpha^*e_i, e_i>\\
&=&-<K_{T,z^s,\overline{z}^t} h^{-1} \alpha^*e_i, \alpha ^*e_i>\\
&=&-<K_{T,z^s,\overline{z}^t} h^{-1} \left(
                              \begin{array}{llllll}\alpha^{1*}_1
\alpha^{*2}_1 \cdots  \alpha^{l*}_1 \cdots\\
\alpha^{1*}_2
\alpha^{*2}_2 \cdots  \alpha^{l*}_2 \cdots\\
\cdots\cdots\cdots\cdots\cdots\\
\alpha^{1*}_n \alpha^{*2}_n \cdots  \alpha^{l*}_n \cdots\\
\end{array}\right)\left(\begin{array}{llll}0\\\vdots\\1\\\vdots\end{array}\right), \left(
                              \begin{array}{llllll}\alpha^{1*}_1
\alpha^{*2}_1 \cdots  \alpha^{l*}_1 \cdots\\
\alpha^{1*}_2
\alpha^{*2}_2 \cdots  \alpha^{l*}_2 \cdots\\
\cdots\cdots\cdots\cdots\cdots\\
\alpha^{1*}_n \alpha^{*2}_n \cdots  \alpha^{l*}_n \cdots\\
\end{array}\right)\left(\begin{array}{llll}0\\\vdots\\1\\\vdots\end{array}\right)>\\
&=&-<K_{T,z^s,\overline{z}^t} h^{-1}
\left(\begin{array}{llll}\alpha^{i*}_1\\
\alpha^{i*}_2\\\vdots\\ \alpha^{i^*}_n\\\end{array}\right),
\left(\begin{array}{llll}\alpha^{i*}_1\\
\alpha^{i*}_2\\\vdots\\ \alpha^{i^*}_n\\\end{array}\right)>\\
\end{array}\end{eqnarray*}
Now let
\begin{eqnarray*}h=\begin{pmatrix}
h_{11}& h_{12}&\cdots&h_{1n}\\
h_{21}& h_{22}& \cdots&h_{2n}\\
\cdots& \cdots& \cdots&\cdots\\
h_{n1}&h_{n2}& \cdots&h_{nn}\\
\end{pmatrix}, h^{-1}=\begin{pmatrix}
\widetilde{h}_{11}& \widetilde{h}_{12}&\cdots&\widetilde{h}_{1n}\\
\widetilde{h}_{21}& \widetilde{h}_{22}& \cdots&\widetilde{h}_{2n}\\
\cdots& \cdots& \cdots&\cdots\\
\widetilde{h}_{n1}&\widetilde{h}_{n2}& \cdots&\widetilde{h}_{nn}\\
\end{pmatrix},\end{eqnarray*}
and
\begin{eqnarray*}-K_{T,z^s,\overline{z}^t}=\begin{pmatrix}
K_{11}& K_{12}&\cdots&K_{1n}\\
K_{21}& K_{22}& \cdots&K_{2n}\\
\cdots& \cdots& \cdots&\cdots\\
K_{n1}&K_{n2}& \cdots&K_{nn}\\
\end{pmatrix}.\end{eqnarray*}
 Then we have
\begin{eqnarray*}\begin{array}{lllll}
-<K_{T,z^s,\overline{z}^t} h^{-1}
\left(\begin{array}{llll}\alpha^{i*}_1\\
\alpha^{i*}_2\\ \vdots\\ \alpha^{i^*}_n\\\end{array}\right),
\left(\begin{array}{llll}\alpha^{i*}_1\\
\alpha^{i*}_2\\ \vdots\\ \alpha^{i^*}_n\\\end{array}\right)>
&=&(\alpha^i_1,\alpha^i_2,\cdots,\alpha^i_n)K_{T,z^s,\overline{z}^t} h^{-1}\left(\begin{array}{llll}\alpha^{i*}_1\\
\alpha^{i*}_2\\ \vdots\\ \alpha^{i^*}_n\\\end{array}\right)\\
\end{array}\end{eqnarray*}
\begin{eqnarray*}\begin{array}{lllll}
&=&(\alpha^i_1,\alpha^i_2,\cdots,\alpha^i_n)\begin{pmatrix}
K_{11}& K_{12}&\cdots&K_{1n}\\
K_{21}& K_{22}& \cdots&K_{2n}\\
\cdots& \cdots& \cdots&\cdots\\
K_{n1}&K_{n2}& \cdots&K_{nn}\\
\end{pmatrix}h^{-1}\left(\begin{array}{llll}\alpha^{i*}_1\\
\alpha^{i*}_2\\ \vdots\\ \alpha^{i^*}_n\\\end{array}\right)\\
\end{array}\end{eqnarray*}
\begin{eqnarray*}\begin{array}{lllll}
&=&(\sum\limits_{j=1}^nK_{j1}\alpha^i_j,\cdots,\sum\limits_{j=1}^nK_{jn}\alpha^i_j)\left(\begin{array}{llll}\sum\limits_{k=1}^n\widetilde{h}_{1,k}\alpha^{i*}_k\\
\sum\limits_{k=1}^n\widetilde{h}_{2,k}\alpha^{i*}_k\\ \vdots\\ \sum\limits_{k=1}^n\widetilde{h}_{n,k}\alpha^{i*}_k\\\end{array}\right)\\
&=&(\sum\limits_{j=1}^nK_{j1}\alpha^i_j)(\sum\limits_{k=1}^n\widetilde{h}_{1,k}\alpha^{i*}_k)+\cdots+(\sum\limits_{j=1}^nK_{jn}\alpha^i_j)(\sum\limits_{k=1}^n\widetilde{h}_{n,k}\alpha^{i*}_k)\\
&=&(\sum\limits_{j=1}^n\sum\limits_{k=1}^n
K_{j1}\widetilde{h}_{1,k}\alpha^i_j\alpha^{i*}_k)+\cdots+(\sum\limits_{j=1}^n\sum\limits_{k=1}^n
K_{jn}\widetilde{h}_{n,k}\alpha^i_j\alpha^{i*}_k).
\end{array}\end{eqnarray*}
Thus, we have that
$$\sum\limits_{i=1}^{\infty}<{\mathscr K}_{s,t}(P)\sigma_i, \sigma_i>=(\sum\limits_{i=1}^{\infty}\sum\limits_{j=1}^n\sum\limits_{k=1}^n
K_{j1}\widetilde{h}_{1,k}\alpha^i_j\alpha^{i*}_k)+\cdots+(\sum\limits_{i=1}^{\infty}\sum\limits_{j=1}^n\sum\limits_{k=1}^n
K_{jn}\widetilde{h}_{n,k}\alpha^i_j\alpha^{i*}_k)\eqno{(4.1.1)}$$

Since $h^{-1}h=I$, then it follows that
$$\sum\limits_{k=1}^n \widetilde{h}_{l,k}h_{k,j}=0, l\neq j;
\sum\limits_{k=1}^n \widetilde{h}_{l,k}h_{k,j}=1,l=j.$$ Note that
$h=\alpha\alpha^*$, then we have
$h_{k,j}=\sum\limits_{i=1}^{\infty}\alpha^i_j\alpha^{i*}_{k}.$ Then
we have
$$0=\sum\limits_{k=1}^n \widetilde{h}_{l,k}h_{k,j}=\sum\limits_{k=1}^n
\widetilde{h}_{l,k}(\sum\limits_{i=1}^{n}\alpha^i_j\alpha^{i*}_k),
j\neq l,$$ and
$$1=\sum\limits_{k=1}^n \widetilde{h}_{l,k}h_{k,j}=\sum\limits_{k=1}^n
\widetilde{h}_{l,k}(\sum\limits_{i=1}^{n}\alpha^i_j\alpha^{i*}_k),
j= l.$$

So for any $l=0,1,\cdots, n$, we have

$$\begin{array}{llll}\sum\limits_{i=1}^{\infty}\sum\limits_{j=1}^n\sum\limits_{k=1}^n
K_{j1}\widetilde{h}_{l,k}\alpha^i_j\alpha^{i*}_k&=&\sum\limits_{j=1}^nK_{jl}(\sum\limits_{k=1}^n\widetilde{h}_{l,k}\sum\limits_{i=1}^n\alpha^i_j\alpha^{i*}_k)\\
&=&\sum\limits_{j=1}^nK_{jl}(\sum\limits_{k=1}^n\widetilde{h}_{lk}h_{k,j})\\
&=&K_{ll}.
\end{array}$$

Thus, we have
$$ \sum\limits_{i=1}^{\infty}<{\mathscr K}_{s,t}(P_T)\sigma_i, \sigma_i>=\sum\limits_{l=1}^nK_{ll}=-trace(K_{T,z^s,\overline{z}^t}).$$
\end{proof}

Let $\mathscr{E}$ be a Hilbert space and $P:\mathbb{D}\rightarrow
B(\mathscr{E})$ be an $C^2$, projection valued function and
$P\partial P=0$. Note that $P$ is also a holomorphic curve on
$\Omega$.  Similar to the main theorem in \cite{Treil} which is
mentioned in the beginning of 4.1, a natural question is when one
can find $V_i\in L^{\infty}_{\mathscr{E} \rightarrow \mathscr{E}},
i=1,2$ such that
$$F:=\partial P-(I-P)V_1(I-P)-PV_2P\in H^{\infty}_{\mathscr{E}\rightarrow
\mathscr{E}}.$$ By a similar proof to Theorem 0.2 in \cite{Treil},
we can obtain the following lemma.

\begin{lem}
Let $\mathscr{E}$ be a Hilbert space and $P:\mathbb{D}\rightarrow
\mathcal {L}(\mathscr{E})$ be an $C^2$, projection valued function
and $P\partial P=0$. If there exists  bounded non-negative
subharmonic functions $\psi_i, i=1,2$ such that
$$\|\partial^i P(\lambda)\|^2 \leq \triangle \psi_i(\lambda), \forall \lambda\in \mathbb{D},$$
then there exists some function $V_i\in
L^{\infty}_{\mathscr{E}\rightarrow \mathscr{E}}, i=1,2$ such that
$\partial P-(I-P)V_1(I-P)-PV_2P\in
H^{\infty}_{\mathscr{E}\rightarrow \mathscr{E}}.$

\end{lem}

\begin{prop}
Let $T\in B_n(\mathbb{D})\cap {\mathcal L}({\mathcal
H}^2_{\mathscr{E}}) $ and
 $P_T:\mathbb{D}\rightarrow {\mathcal L}({\mathcal H}^2_{\mathscr{E}})$ be an
 holomorphic curve with $P_T(\lambda)$ is the projection from ${\mathcal H}^2_{\mathscr{E}}$ to
 $\mbox{Ker}(T-\lambda)$ for any $\lambda\in \mathbb{D}$.  If there exists  bounded non-negative
subharmonic functions $\psi_i, i=1,2$ such that
$$-(trace(K_T(\lambda))\leq \triangle \psi_1(\lambda),
 (trace(2K^2_T(\lambda))-trace(K_{T,z,\overline{z}}(\lambda))\leq \triangle \psi_2(\lambda), \forall \lambda\in \mathbb{D}, $$
then there exists some function $V_i\in
L^{\infty}_{\mathscr{E}\rightarrow \mathscr{E}}, i=1,2$ such that
$\partial P-(I-P)V_1(I-P)-PV_2P\in
H^{\infty}_{\mathscr{E}\rightarrow \mathscr{E}}.$
\end{prop}

\begin{proof}
Since we already know that $\|\partial
P(\lambda)\|^2=-(trace(K_T(\lambda))$, then we only need to prove
that $$\|\partial^2
P(\lambda)\|^2=(trace(2K^2_T(\lambda))-trace(K_{T,z,\overline{z}}(\lambda)),
\forall \lambda\in \mathbb{D}.$$

By formulae \ref{2.8}, we have ${\mathscr
K}_{1,1}(P)=\overline{\partial}^2P\partial^2
 P-2(\overline{\partial}P\partial P)^2.$ By Proposition \ref{corvariantf}, we
 have that  $$\begin{array}{llll}
 \sum\limits_{i=1}^{\infty}<{\mathscr K}_{1,1}(P_T)\sigma_i, \sigma_i>&=&\sum\limits_{i=1}^{\infty}<\overline{\partial}^2P\partial^2
 P\sigma_i, \sigma_i>-\sum\limits_{i=1}^{\infty}<2(\overline{\partial}P\partial P)^2\sigma_i, \sigma_i>\\
&=&\sum\limits_{i=1}^{\infty}<\partial^2
 P\sigma_i, \partial^2P\sigma_i>-\sum\limits_{i=1}^{\infty}<2(\overline{\partial}P\partial P)^2\sigma_i, \sigma_i>\\
&=&\|\partial^2P(\lambda)\|^2-\sum\limits_{i=1}^{\infty}<2\alpha K^2_{T} h^{-1} \alpha^*\sigma_i, \sigma_i>\\
&=&\|\partial^2P(\lambda)\|^2- 2trace(K^2_{T}(\lambda))\\
 \end{array}$$
Since $\sum\limits_{i=1}^{\infty}<{\mathscr K}_{1,1}(P_T)\sigma_i,
 \sigma_i>=-trace(K_{T,z,\overline{z}}(\lambda)), \forall \lambda\in
 \mathbb{D}.$ This finishes the proof of the Proposition.
\end{proof}

\subsection{Similarity of holomorphic bundles}
 Let $E$ be a Hermitian holomorphic vector bundle of rank $n$ over a
bounded domain $\Omega$ in $ \mathbb{C}$ and let $F$ be a sub-bundle
of $E$ of rank $r$. Suppose that $\{e_1, e_2, \cdots e_r\}$ is a
frame of $F$ and let $\{e_1, e_2, \cdots e_r,
e_{r+1},e_{r+2},\cdots, e_n\}$ be a frame of $E$ which is the
holomorphic frame according to the frame $\{e_1, e_2, \cdots e_r\}$ of
$F$. In this case, the quotient bundle $E/F$ possesses the frame
$\{[e_{r+1}],[e_{r+2}],\cdots, [e_n]\}$, where $[e_i],i=r+1,\cdots,
n$ are the equivalent class of $e_i$ in the quotient bundle $E/F$.

Set the metrics of $E$, $F$ and $E/F$ as the following:
$$h_E=((<e_j,e_i> ))^n_{i,j=1}, h_{F}=((<e_j,e_i> ))^r_{i,j=1},h_{E/F}=((<[e_j],[e_i]>
))^{n}_{i,j=r+1}.$$ Then we also can obtain the corresponding
curvatures of $E$, $F$ and $E/F$ as $K_{E}, K_{F}, K_{E/F}.$

Let $\mathscr{E},{\mathscr{E}}^*$ be two separable Hilbert spaces.
Let ${\mathscr{H}}_{{\mathscr{E}}^*\rightarrow \mathscr{E}}$ denote
the operator Hardy class of bounded analytic functions whose values
are the operators belongs to $B({\mathscr{E}}^*, \mathscr{E})$.

\begin{defn}\label{hol}
Let $\mathscr{E}$ and $\mathscr{F}$ be two separable Hilbert spaces.
Let $f$ be a holomorphic frame over a bounded domain $\Omega$ in $
\mathbb{C}$ with $f(\lambda)=\bigvee\{\{e_1(\lambda), e_2(\lambda),
\cdots e_r(\lambda)\}$.  Let $g$ be a  holomorphic frame over a
bounded domain $\Omega$ in $ \mathbb{C}$ with
$g(\lambda)=\bigvee\{e_1, e_2, \cdots e_r, e_{r+1},e_{r+2},\cdots,
e_n\}$. Suppose that $\bigvee\limits_{\lambda\in
\Omega}\{e_1(\lambda), e_2(\lambda), \cdots
e_r(\lambda)\}=\mathscr{F},$ and $\bigvee\limits_{\lambda\in
\Omega}\{e_1(\lambda), e_2(\lambda), \cdots
e_n(\lambda)\}=\mathscr{E}.$

Let $E_g/E_f$ be the quotient bundle induced by $f$ and $g$. Then
$E_g/E_f$ admits the following frame
$$E_g/E_f(\lambda)=\{[e_{r+1}],[e_{r+2}],\cdots, [e_n]\}.$$

\end{defn}

\begin{thm}\label{Treil}\cite{Treil} Let $\mathscr{E}$ be a Hilbert space and
$\Pi:D\rightarrow B(\mathscr{E})$ be an $C^2$ function which
satisfies that $\Pi^2=\Pi=\Pi^*$ and $\Pi\partial \Pi=0$. If there
exists a bounded non-negative subharmonic function $\psi$ such that
$$\triangle \psi(z)\geq \|\partial \Pi(z)\|_{{\mathcal HS}}^2, \forall z\in \mathbb{D}.$$
Then there exists a bounded analytic projection on to
$\mbox{Ran}\Pi(z)$, i.e. such that $P(z)$ is a projection onto
$\mbox{Ran}\Pi(z)$ for all $z\in \mathbb{D}$.
\end{thm}

\begin{lem}\label{Dinesh}\cite{Dinesh}
Let $0\rightarrow F\rightarrow E\rightarrow E/F$  be an exact
sequence of Hermitian holomorphic vector bundles. Then
$trace(K_{E/F}) = trace(K_E)-trace(K_F).$
\end{lem}

By using the curvature formulae in Lemma 2.6 (See (2.6.26)), Theorem
\ref{Treil}, Lemma \ref{Dinesh} and similar techniques in the main
theorems in \cite{Kwon1} and \cite{Kwon2}, we have the following
theorem:

\begin{thm}
 Let $f$ and $g$ be the holomorphic curves defined in Definition \ref{hol}.  Then $E_f\otimes
(E_g/E_f)\sim_s \bigoplus\limits_{i=1}^{n-r}E_f$ if and only if
 there exists a bounded harmonic function $\psi$ such that
$$trace K_{E_g}-trace{K_{E_f}}\leq \Delta \psi,$$ where
$\Delta$ denote the normalized Laplacian i.e. $\Delta=\partial
\overline{\partial}.$

\end{thm}

\end{document}